\documentclass[12pt]{article}
\usepackage{amsmath,amssymb,amsfonts,amsthm,url,xspace,graphicx,color,mathrsfs,enumitem}
\usepackage{tikz}
\usepackage{bm}
 \setlength{\textwidth}{6.7 in}
 \setlength{\textheight}{8.5in}
 \setlength{\oddsidemargin}{1pt}
 \setlength{\evensidemargin}{1pt}
 
\usepackage[colorlinks=true,linkcolor=blue,citecolor=blue,urlcolor=blue,pdfborder={0 0 0}]{hyperref}
\usepackage{graphicx}
\usepackage{subcaption}
\usepackage{float}

\usepackage{xcolor}

\theoremstyle{plain}
\newtheorem{thm}{Theorem}[section]

\newtheorem{lemma}[thm]{Lemma}

\newtheorem{cor}[thm]{Corollary}

\newcommand{\eps}{\varepsilon}
\newcommand{\C}{\mathbb{C}}
\newcommand{\Z}{\mathbb{Z}}

\newcommand{\G}{\mathcal{G}}

\newcommand{\K}{\mathbb{K}}

\newcommand{\E}{\mathbb{E}}
\newcommand{\R}{\mathbb{R}}
\newcommand{\T}{\mathbb{T}}
\newcommand{\N}{\mathbf{N}}
\newcommand{\NN}{\mathbb{N}}
\newcommand{\M}{\mathcal{M}}
\newcommand{\x}{\mathrm{x}}
\newcommand{\X}{\mathbf{x}}

\newcommand{\Cov}{\mathrm{Cov}}
\newcommand{\Var}{\mathbf{Var}}

\renewcommand{\Re}{\operatorname{Re}}
\renewcommand{\Im}{\operatorname{Im}}
\renewcommand{\P}{\mathbb{P}}
\renewcommand{\t}{\mathrm{t}}
\renewcommand{\b}{\mathrm{b}}
\newcommand{\w}{\mathrm{w}}

\newcommand{\be}{\begin{equation}}
\newcommand{\ee}{\end{equation}}

\newcommand{\old}[1]{}

\title{The multinomial tiling model}
\author{Richard Kenyon\footnote{Department of Mathematics, Yale University, New Haven; \{richard.kenyon,andrei.pohoata\}@yale.edu.} \and Cosmin Pohoata\footnotemark[1]}
\begin{document}
\maketitle
\abstract{
Given a graph $\G$ and collection of subgraphs $T$ (called tiles), 
we consider covering $\G$ with copies of tiles in $T$
so that each vertex $v\in\G$ is covered with a predetermined multiplicity. The \emph{multinomial tiling model} 
is a natural probability measure on such configurations (it is the uniform measure on standard tilings of the corresponding  ``blow-up'' of $\G$).

In the limit of large multiplicities we compute asymptotic growth rate of the number of multinomial tilings. We show that the individual tile densities tend to a Gaussian field with respect to an associated discrete Laplacian.
We also find an exact discrete Coulomb gas limit when we vary the multiplicities.

For tilings of $\Z^d$ with translates of a single tile and a small density of defects, we study a crystallization phenomena when the defect density tends to zero, and give examples
of naturally occurring quasicrystals in this framework.
}

\section{Introduction}

The study of random tilings is a cornerstone area of combinatorics and statistical mechanics.
In its simplest form, the random tiling model consists in the study of the set of 
tilings of a region (for example a subset of the plane) with translated copies of a finite collection
of shapes, called prototiles. However even the simplest cases can lead to 
hard problems. The mere existence of a tiling of a region in $\R^2$ 
with a prescribed set of polyominos is an NP-complete problem \cite{Levin}, even if the prototiles
consist in just the $3\times1$ and $1\times3$ rectangles \cite{BNRR}. 
Enumerating tilings is of course even harder.

However in the few cases where we \emph{can} analyze random tilings, like random ``domino" tilings
(tilings with $2\times 1$ and $1\times 2$ rectangles) or ``lozenge" tilings (tilings with $60^\circ$ rhombi),
we find very rich behavior, with beautiful enumerative properties \cite{Kast, TF, EKLP, EKLP2, Macmahon}, phase transitions \cite{KOS}, limit shapes \cite{KO}, conformal invariance \cite{K.ci}, and Gaussian scaling limits \cite{K.GFF}. 
Beyond these and other dimer models there are almost no other cases we can analyze in detail. 
There are other cases where enumeration is sometimes possible, like the $6$-vertex model \cite{Lieb}, but for these models very little is known about 
correlations, although they are sometimes predicted in physics to be Gaussian in the scaling limit and ``conformally invariant''---such models were in fact the inspiration for conformal field theory.

We study here a variant of the random tiling problem: the \emph{multinomial tiling problem}, 
which is tractable in the sense that we can give 
exact generating functions for enumerations, which in turn yield, in the limit of large multiplicity,
exact asymptotic expressions
for growth rates and Gaussian behavior for random tilings. This setting is quite general
and works for tilings in arbitrary graphs, not just plane regions. 
Furthermore we 
find all of the phenomena discussed above: phase transitions, limit shapes, 
crystallization phenomena, and conformal invariance (which we study in \cite{KP}). 

It comes as an additional surprise that in certain situations our random tilings form \emph{quasicrystals}.
Quasicrystals were first found in nature by Schechtman et al \cite{Schechtman}. Their physical and mathematical framework is still debated, but examples
of quasiperiodic tilings were first found by Berger \cite{Berger} and familiar examples like Penrose tilings are
now well understood \cite{deBruijn}; they are sets of tiles which tile the plane but only in nonperiodic fashion. 
Our quasicrystals arise from \emph{random} tilings;
it is the statistical correlations between tile densities that are quasiperiodic (even though there are periodic points in the configuration space). For other examples of random 
quasicrystals, see for example \cite{Kalugin, dGN}.

Let $\G=(V,E)$ be a finite graph and 
let $T=\{t_1,\dots,t_k\}$ be a collection of subgraphs of $\G$, called \emph{tiles}.
Let $\N=\{N_v\}_{v\in\G}$ be nonnegative integers associated to vertices of $\G$. 
Define a new graph $\G_{\N}$, the ``$\N$-fold blowup'' of $\G$, to be the graph 
obtained by replacing each vertex $v$ of $\G$ with $N_v$
vertices, and each edge $uv$ with the complete bipartite graph $K_{N_u,N_v}$.
Now each tile $t\in T$ can be lifted to a subset of $\G_{\N}$ in many ways:
if $t$ has vertices $v_1,\dots,v_k$ then it has $N_{v_1}\cdots N_{v_k}$-many lifts.

We consider \emph{tilings} of $\G_{\N}$ with lifts of copies of tiles in $T$ (a tiling is a partition of the vertices of
$\G_{\N}$ into disjoint sets each of which is a lift of a single tile of $T$). 
Let $\Omega(\N)$
be the set of all tilings; we call these \emph{$\N$-fold tilings}.
%For example if $N_v\equiv 1$ then $\Omega(\N)$ is precisely the set of standard tilings of $\G$ with tiles in $T$.

Let $w:T\to\R_{>0}$ be a positive real weight assigned to each tile.
An $\N$-fold tiling $m$ is assigned a weight $w(m) := \prod_{t\in T}w_t^{m(t)}$
where $m(t)$ is the number of copies of $t$ used. 
The \emph{partition function} for $\N$-fold tilings is defined to be
$$Z(w,\N) = \sum_{m\in \Omega(\N)} w(m).$$
%It is convenient to also define the \emph{normalized partition function} $\tilde Z(w,\N) = \frac{Z(w,\N)}{\N!}$ where $\N! = \prod_v N_v!$. 
We let $\mu=\mu(\N)$ be the natural probability measure on $\Omega(\N)$, giving an $\N$-fold tiling
a probability proportional to its weight. 

We note that $\mu$ is \emph{not} the same as the uniform measure on tilings of $\G$ covering each vertex $N_v$ times;
each such ``multiple tiling'' of $\G$ can be typically lifted to a tiling of $\G_{\N}$ in many ways.

\subsection{Results}

We compute a generating function for $Z(w,\N)$ (Theorem \ref{expgf}), and 
the asymptotic growth rate of $Z(w,\N)$ as $\N\to\infty$, see (\ref{sigma}). 
This computation involves solving a nonlinear system of equations (\ref{criticalwts});
however the solution is realized as the unique minimizer of a convex functional 
(Theorem \ref{divgauge}).

In Theorem \ref{cov} we show that in the $\N\to\infty$ limit the tile occupation fractions tend to a
\emph{Gaussian field}
governed by a discrete Laplacian operator $\Delta$ on $\G$, the \emph{tiling laplacian}. 

For transitive graphs, when we vary the multiplicities, we obtain a \emph{Coulomb gas}: defects in multiplicity interact via Coulombic 
potentials arising from $\Delta$ (see Section \ref{coulomb}).

Under certain conditions on transitive graphs, our random multinomial tilings also undergo a crystallization phenomenon,
where the correlations between distant tiles no longer decay; the tiling freezes into a periodic or quasiperiodic state. This occurs on $\Z^2$, for example, tiled with translates of the $L$-triomino and a small density of singleton
monomers (Section \ref{L}). As the density of monomers tends to zero the correlation length 
of the system tends to infinity, and the system \emph{freezes}. 
There is a spontaneous symmetry breaking, since there are three distinct
crystalline states (corresponding to the three distinct---up to translation---periodic
tilings of the plane with $L$ triominos). 

For certain other polyominos we get similar freezing
phenomena, and others we don't; the behavior 
depends on the presence and type of
zeros of the underlying \emph{characteristic polynomial} $p(z,u)$
on the unit torus $\T^2\subset\C^2$. 
If the (isolated) zeros on $\T^2$ are sufficiently ``generic", we show that the resulting tiling will be a quasicrystal
(Section \ref{qp2}).
However there is a plethora of nongeneric behavior for the roots of $p$ as the tile type varies, 
yielding a similarly wide variety
of behaviors for random tilings (Section \ref{other}).

\bigskip
\noindent{\bf Acknowledgments:} We thank Wilhelm Schlag and Jim Propp for helpful conversations. R.K. was supported by NSF DMS-1854272, DMS-1939926 and the Simons Foundation grant 327929.

\section{Combinatorics}

It is convenient to generalize our definition of tile, to allow the vertices of a tile to have multiplicity larger than 
one. The vertices of a tile $t$ then form a \emph{multiset} of vertices of $\G$, that is, a subset in which each vertex $v$ has a nonnegative integer multiplicity $t_v$. We identify a tile with its multiset. If we want to think about a tile as
a subgraph, we take all edges of $\G$ connecting vertices which have positive multiplicity in $t$.
A lift of a tile $t$ to $\G_\N$ corresponds to a choice, for each $v\in\G$, of $t_v$ distinct vertices of $\G_\N$
lying above $v$, along with the set of all edges of $\G_\N$ joining these vertices (that is, the induced subgraph
of $\G_\N$ on these vertices). Each lift of $t$ is a blow up
of (the subgraph underlying) $t$.

\subsection{Generating function}

We associate a variable $x_v$ to each vertex $v\in\G$. To each tile $t\in T$ is associated the monomial $x_t = \prod_{v\in\G} \frac{x_v^{t_v}}{t_v!}$.
(Here the factor $t_v!$ accounts for the indistinguishability of the vertices of the same type in a lift of $t$.)
Let $P=P(x_1,\dots,x_{V})$ be the polynomial $P=\sum_{t\in T}w_tx_t$. We call $P$ the \emph{tiling polynomial}.
The function $F(X_1,\dots,X_V) := \log P(e^{X_1},\dots,e^{X_V})$ is called the \emph{free energy}  (see 
Section \ref{freeenergysection} below).

\begin{thm}\label{expgf}
Let $\x^\N = \prod_{v}x_v^{N_v}$. Then
$$Z(w) := \exp(P) = \sum_{\N\ge0} Z(w,\N)\frac{\x^\N}{\N!}$$
where the sum is over all vectors of nonnegative multiplicities. If we fix the total number $K$ of tiles then the corresponding 
generating function is $P^K/K!$. 
\end{thm}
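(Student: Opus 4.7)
The plan is to prove the identity by expanding the exponential generating function on one side and giving a bijective/counting interpretation of the coefficients on the other. Since $P = \sum_{t \in T} w_t x_t$ is a sum over tile types with ``independent'' variables (in the sense that the weights attached to distinct tile types multiply across factors), the natural first move is to factor
\[
\exp(P) \;=\; \prod_{t \in T} \exp(w_t x_t) \;=\; \prod_{t \in T} \sum_{k_t \ge 0} \frac{(w_t x_t)^{k_t}}{k_t!},
\]
and then substitute $x_t = \prod_v x_v^{t_v}/t_v!$ to read off the coefficient of $\x^\N/\N!$. Expanding and regrouping, the coefficient of $\prod_v x_v^{N_v}$ is
\[
\sum_{\{k_t\}:\, \sum_t k_t\, t_v = N_v\;\forall v}\; \prod_{t} \frac{w_t^{k_t}}{k_t!}\; \prod_{v}\frac{1}{\prod_t (t_v!)^{k_t}},
\]
so multiplying by $\N! = \prod_v N_v!$ yields the candidate formula for $Z(w,\N)$.

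Next I would give the combinatorial interpretation. Fix a vector of tile-multiplicities $\{k_t\}_{t\in T}$ satisfying the balance condition $\sum_t k_t\, t_v = N_v$ for all $v$, and count the tilings $m \in \Omega(\N)$ with $m(t) = k_t$. Because the blowup structure is independent at each vertex of $\G$, at each $v \in \G$ one is simply distributing the $N_v$ fiber vertices among the $K := \sum_t k_t$ (labeled) tile slots, with slot of type $t$ consuming $t_v$ fiber vertices. This is a multinomial count: $N_v!/\prod_t (t_v!)^{k_t}$ ways. Taking the product over $v$ and then quotienting by $\prod_t k_t!$ (to pass from labeled tile slots to unordered tilings) shows that the number of tilings with these multiplicities is exactly
\[
\frac{\prod_v N_v!}{\prod_t k_t!\,\prod_{v,t}(t_v!)^{k_t}}.
\]
Multiplying by the tile-weight factor $\prod_t w_t^{k_t}$ and summing over the allowed multiplicity vectors gives $Z(w,\N)$, which matches the coefficient above term by term. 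This establishes the first identity.

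For the second claim, about $P^K/K!$, the same calculation but retaining only the terms with total tile count $K$ works directly: $P^K = K!\sum_{\{k_t\}:\sum k_t=K}\prod_t (w_t x_t)^{k_t}/k_t!$ by the multinomial theorem, and repeating the argument above with this restricted sum yields the generating function for $\N$-fold tilings using exactly $K$ tiles.

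The computation is essentially bookkeeping, so there is no serious obstacle; the only point requiring care is the combinatorial count of tilings with fixed multiplicities, specifically checking that tile lifts decouple across the fibers over distinct vertices of $\G$ (so that the product-of-multinomials formula is valid) and that the quotient by $\prod_t k_t!$ correctly accounts for the indistinguishability of equal tile types in an unordered tiling. Once the matching of factorials $t_v!$ in $x_t$ with those appearing in the multinomial count is tracked carefully, the two sides agree monomial by monomial.
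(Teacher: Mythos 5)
Your proof is correct and follows essentially the same route as the paper: the heart of both arguments is the count $\prod_v N_v!\big/\bigl(\prod_t k_t!\,\prod_{v,t}(t_v!)^{k_t}\bigr)$ of tilings with prescribed tile multiplicities, obtained by a multinomial distribution of the fiber vertices at each $v$ followed by division by $\prod_t k_t!$ for the tile labelings. The only cosmetic difference is direction: you expand $\exp(P)$ and match coefficients, while the paper builds the generating function from the count; these are the same computation.
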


\begin{proof} 
Suppose we use tile $t$ with multiplicity $K_t$. Label the abstract copies of tile $t$ with labels $\ell\in\{1,\dots,K_t\}$. To place those tiles in $\G_{\N}$,
at each vertex $v$ of $t$, we must choose $K_t$ subsets of size $t_v$  (one of each label $\ell$) out of the $N_v$ vertices of $\G_\N$ lying over $v$.  Taking into account all tiles, this is a multinomial coefficient
at vertex $v$: it is $\frac{N_v!}{\prod_t(t_v!)^{K_t}}$ total choices. We take the product of these over all vertices, and then need to divide
by $\prod_t K_t!$, the set of choices of initial labellings. In total,
the number of tilings with tile multiplicities $K_t$ and vertex multiplicities $\N$ is
$$\frac{\prod_vN_v!}{\prod_t K_t!\prod_{v,t}(t_v!)^{K_t}}.$$ Multiplying by $\prod_t (w_t\prod x_v^{t_v})^{K_t},$ 
the tile weights (and factors of $x$), dividing by $\N!$, this is
$$\prod_t \frac{(w_t\prod_v\frac{x_v^{t_v}}{t_v!})^{K_t}}{K_t!}$$
and summing over the $K_t$s gives the result.
\end{proof}

\subsection{Feasible multiplicities}

For a given graph $\G=(V,E)$ and tiling set $T$, not all multiplicities $\N\in (\Z_{\ge0})^{V}$ are feasible. 
The set of feasible multiplicities $\M_\Z=\M_\Z(T,\G)$ is (just by definition) the set of 
nonnegative integer linear combinations of vectors $v_t= \sum_{v\in V} t_ve_v$,
where $\{e_v\}_{v\in V}$ are the standard basis vectors for $\Z^V$. 

In other words we have $\M_{\Z} = D((\Z_{\ge0})^T)$ where $D:\R^T\to \R^V$ is the linear map defined by 
$D(e_t) = \sum_{v\in V}t_ve_v$.
In the standard basis the matrix of $D$ (which we also denote $D$) is called 
the \emph{incidence matrix} of the tiling problem: $D=(D_{v,t})$ where $D_{v,t} = t_v$, the multiplicity of $v$ in $t$. 
Feasible multiplicities $\M_{\Z}$ are certain integer points in a real polytopal cone $\M_\R\subset\R^V$; 
$\M_\R=D((\R_+)^{T})$.  

Typically not all integer points in $\M_\R$ are in $\M_\Z$.
For example if all tiles have size $\delta$ then necessarily $\N$ sums
to a multiple of $\delta$. More generally if $\phi$ is a homomorphism
from $\Z^V$ to some abelian group, with the property that $\phi(D(e_t))=0$ for all tiles $t$
then $\phi(\N)=0$ as well. In the language of tilings these are
called ``coloring" conditions. 

As a typical example of a coloring condition,
suppose we wish to tile $\Z^2$ or a subgraph of it with translates of
bars of length $3$: translates of $\{(0,0),(1,0),(2,0)\}$
and $\{(0,0),(0,1),(0,2)\}$. Let $\phi:\Z^{\Z^2}\to \Z/3\Z$ be defined
by $\phi(e_{(x,y)})=x\bmod 3$. 
Note that
$\phi$ applied to the translate of any tile is zero:
$$\phi(e_{(x,y)}+\phi(e_{(x+1,y)})+\phi(e_{(x+2,y)}) = 0 = \phi(e_{(x,y)})+\phi(e_{(x,y+1)})+\phi(e_{(x,y+2)}).$$
We conclude that
$\phi(\N)=0$ for any feasible multiplicity. 
This is equivalent to saying that $\N$
must include an equal number of vertices in each of the three translates of the sublattice $\{(x,y)\in\Z^2~|~x+y\equiv 0\bmod 3\}$. The same argument  with 
$\phi_-(e_{(x,y)}) = y\bmod 3$ gives another linear constraint on $\N$. 

\subsection{Homology}

The incidence map $D:\R^T\to\R^V$ is generally neither surjective nor injective. Letting $D^*$ be its transpose with respect
to the standard bases, we write 
$\R^V\cong \Im(D)\oplus \ker(D^*)$ and $\R^T\cong \Im(D^*)\oplus \ker(D)$.
These are orthogonal decompositions with respect to the standard inner products. The map 
$D$ is an isomorphism from $\Im(D^*)$ to $\Im(D)$, and likewise 
$D^*$ is an isomorphism from $\Im(D)$ to $\Im(D^*)$
 
We define $H_1(T,\R):=\R^V/\Im(D)\cong \ker(D^*)$. 
Over the integers we define $H_1(T,\Z):=\Z^V/\Im(D)$ 
to be the cokernel of the map $D$. Colorings 
$\phi$ are then elements of $H_1$,
that is, are functions on vertices which sum to $0$ for each tile: $D^*\phi(t)=\sum_v t_v\phi(v) = 0$.

If all tiles have the same size $\delta$, then $H_1(T,\Z)$ contains a copy of $\Z/\delta\Z$; the corresponding coloring functions are constant functions $f:V\to\Z/\delta\Z$.

For the above example with bars of length $3$, consider tilings of an $n\times n$ grid,
$n\ge 3$.
Then $H_1(T,\R)\equiv \R^4$: an element of $H_1(T,\R)$ is determined by its values
on the lower left $2\times 2$ square in the grid, which can be arbitrary reals. 

The existence of nontrivial integer constraints has an effect on the long-range
behavior of random tilings, see Section \ref{crystalsection} below.

\subsection{Laplacian}

The \emph{tiling laplacian} $\Delta:\R^V\to\R^V$ is the operator $\Delta=DCD^*$,
where $C$ is the diagonal matrix of tile weights $w_t$.
It has matrix $\Delta=(\Delta_{u,v})_{u,v\in V}$ with 
\be\label{lapdef}\Delta_{u,v} = \sum_{t \in T}w_tt_ut_v.\ee
Equivalently, for $f:V\to\R$ we have 
$$(\Delta f)(v) = \sum_u (\sum_tw_tt_ut_v)f(u).$$ 
The laplacian controls the covariances between tile densities, see Section \ref{covsection} below.

\subsection{Gauge equivalence}\label{gaugesection}

Tile weight functions $w,w'$ on $T$ are said to be \emph{gauge equivalent} if 
there is a positive function $f:V\to\R_+$ such that for all $t\in T$,
$w'_{t} = w_{t}\prod_{u\in t}f(u)$. 
We call $f$ a \emph{gauge transformation}.

\begin{lemma} For fixed multiplicities $\N$, 
gauge equivalent weight functions give the same probability measure on multinomial tilings.
\end{lemma}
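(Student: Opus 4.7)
The plan is to show that for any tiling $m\in\Omega(\N)$, the weights $w(m)$ and $w'(m)$ differ only by a multiplicative constant that depends on $\N$ and $f$ but not on $m$. Once this is established, the normalizing partition function picks up the same factor, and the induced probability measures coincide.

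First I would write out $w'(m) = \prod_{t\in T} (w'_t)^{m(t)}$ and substitute the gauge relation $w'_t = w_t\prod_{u\in t} f(u)^{t_u}$ (reading the product over $u\in t$ as a product over $u\in V$ weighted by the multiplicity $t_u$, since we allow multisets). This gives
\[
w'(m) = \prod_{t\in T} w_t^{m(t)} \prod_{t\in T}\prod_{u\in V} f(u)^{t_u m(t)} = w(m)\,\prod_{u\in V} f(u)^{\sum_{t\in T} t_u m(t)}.
\]

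The key step is the observation that for any tiling $m\in\Omega(\N)$, each of the $N_u$ vertices of $\G_\N$ lying above $u\in V$ is covered exactly once, so $\sum_{t\in T} t_u m(t) = N_u$ for every $u$. This is where the hypothesis that $\N$ is fixed enters in an essential way. Substituting,
\[
w'(m) = w(m)\,\prod_{u\in V} f(u)^{N_u} =: C(\N,f)\cdot w(m),
\]
with $C(\N,f)$ independent of the particular tiling $m$.

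Summing over $m\in\Omega(\N)$ yields $Z(w',\N) = C(\N,f)\, Z(w,\N)$, and therefore
\[
\mu'(m) = \frac{w'(m)}{Z(w',\N)} = \frac{C(\N,f)\, w(m)}{C(\N,f)\, Z(w,\N)} = \mu(m),
\]
for every $m\in\Omega(\N)$, as desired. There is no real obstacle here beyond keeping track of the multiset interpretation of the product $\prod_{u\in t} f(u)$ when a vertex appears in $t$ with multiplicity greater than one; the content of the lemma is entirely captured by the conservation identity $\sum_t t_u m(t) = N_u$.
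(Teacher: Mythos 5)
Your proof is correct and is essentially identical to the paper's: both substitute the gauge relation into $\prod_t(w'_t)^{m(t)}$, pull the factors of $f$ out as $\prod_v f(v)^{\sum_t t_v m(t)}$, and use the covering identity $\sum_t t_v m(t)=N_v$ to conclude the weights differ by an $m$-independent constant. Your explicit handling of the multiset interpretation and the final normalization step are just slightly more spelled-out versions of what the paper leaves implicit.
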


\begin{proof} Suppose $w'$ is gauge equivalent to $w$, that is 
$w'_{t} = w_{t}\prod_{v}f(v)^{t_v}$. An $\N$-fold tiling $m$ for weights $w'$ has weight 
\begin{eqnarray*}
\prod_t (w'_t)^{m(t)}&=&\prod_t \left(w_t^{m(t)}\prod_{v}f(v)^{t_vm(t)}\right) \\
&=&  \left(\prod_t w_t^{m(t)}\right)\prod_vf(v)^{\sum_{t}t_vm(t)} \\
&=&  \left(\prod_t w_t^{m(t)} \right)\prod_{v}f(v)^{N_v}.
\end{eqnarray*}
In particular its weight for $w'$ is equal to its weight for $w$ multiplied by a constant independent of $m$.
\end{proof}

Note that if $w'$ is gauge equivalent to $w$, then the gauge transformation $f:\G\to\R_+$ from $w$ to $w'$
may not be unique: the set of functions $f$ satisfying $\prod_{v\in t}f(v) = 1$ for all $t$ is by definition
the kernel of $D^*$, written multiplicatively (that is, $\log f\in H_1(T,\R)$).

\section{Asymptotics}\label{asympsection}

In this section compute the asymptotic growth of $Z(w,\N)$ as $\N\to\infty$.

\subsection{Fixing the number of tiles}
\label{number}

Given the multiplicities $\N$, is convenient to also fix the total number of tiles $K$.
If all tiles have the same size $\delta$, then the total number of tiles $K$
is determined by the multiplicities $\N$: we have $K=nN/\delta$ where $n=|V|$. 
More generally we proceed as follows.

We adjoin a new ``dummy" vertex $v_0$ to $\G$, connected to all other vertices. Let $\tilde\G=\G\cup\{v_0\}$ be this new graph.
We add to each tile a number of copies of the dummy vertex $v_0$ 
so that all tiles now have the same size $\delta$.
Let $x_0$ be a variable associated to the new vertex $v_0$,
and let $P_0$ be the new tiling polynomial; it is a homogenization of $P$,
replacing a monomial $z$ by $\frac{x_0^m}{m!}z$, where $m+\deg(z)=\delta$.
The number $\delta$ is the degree of $P_0$, and the size of every tile.

Let $N_0$ be an arbitrary multiplicity at $v_0$, and 
$M=\sum_vN_v$ be the total multiplicity of the other vertices (not including $v_0$).
For tileability we need $M+N_0$ to be a multiple of $\delta$: 
\be\label{MNK}M+N_0=K\delta.\ee
Note then that given the remaining multiplicities, the choice of $N_0$ is linearly related to the number of tiles $K$.

We assume for the rest of the paper, unless explicitly stated, that {\bf all tiles have the same size $\delta$}.
Notationally we can then use $\G$ instead of $\tilde\G$ and $P$ instead of $P_0$.

\subsection{Saddle point}

For each $v\in V$ let $\alpha_v\in\R_+$ be fixed. Let $\vec\alpha=(\alpha_v)_{v\in V}$. We suppose 
$\vec\alpha\in\M_{\R}(\G)$, that is, $\vec\alpha$ is in the cone of feasible multiplicities.
Take $N_v\to\infty$ simultaneously for each $v$, in such a way that each $\N\in\M_\Z(\G)$,
and $\frac{N_v}{K}\to\alpha_v$.
The quantity $\alpha_v$ is the (asymptotic) fraction of tiles covering $v$, and
\be\label{alphasum}\sum_v\alpha_v=\delta.\ee

From Theorem \ref{expgf} we have
$$\frac{K!}{\N!}Z(w,\N) = [\x^\N]P^K.$$
We extract the coefficient of $\x^\N$ of $P^K$ using a contour integral:
$$\frac{K!}{\N!}Z(w,\N) = \frac1{(2\pi i)^V}\int_{(S^1)^V} \frac{P^K}{\prod_{v} x_v^{N_v}}\prod_{v}\frac{dx_v}{x_v}.$$

For large $K$ we use the saddle-point method.
The saddle point is located at the critical point of the integrand, which is defined by the equations, 
one for each $v\in \G$:
$$\frac{\partial}{\partial x_v}(K\log P-\sum_v N_v\log x_v) = 0$$
or in the large-$K$ limit
\be\label{criticalwts}\frac{x_v(P)_{x_v}}{P}=\alpha_v.\ee

Solutions to (\ref{criticalwts}) are discussed in Theorem \ref{divgauge} below. 
Although positive solutions always exist, they are not in general unique; however two positive solutions 
differ only by a gauge equivalence in $H_1(T,\R)$, and as a consequence give rise to the same weight function and growth rate. (See Section \ref{ex2} below for an example with nonuniqueness.)

For a solution $\x = \{x_v\}_{v\in V}$ the growth rate  of $\frac{K!}{\N!}Z(w,\N)$ is 
\begin{align}\nonumber \sigma(w,\{\alpha_v\}) &:= \lim_{K\to\infty}\frac1{K}\log \frac{K!}{\N!}Z(w,\N)\\
&\label{sigma}= \log P(\X)-\sum_v\alpha_v\log \x_{v}.
\end{align}
We call $\sigma(w,\{\alpha_v\})$ the \emph{exponential growth rate} of the multinomial tiling model.

Scaling so that $P=1$, the criticality equations (\ref{criticalwts}) can be written: for all $v\in V$, 
\be\label{simplecriteqns}\sum_t w_tt_vx_t = \alpha_v.\ee

\subsection{Critical gauge}\label{freeenergysection}

\begin{thm}\label{divgauge} For any $\vec\alpha\in \M_\R(\G)$ and weight function $w$ there is a unique gauge equivalent weight function $w'$ with the property
that for all $v$ the sum of weights of tiles containing vertex $v$ (counted with multiplicity) is $\alpha_v$, that is $\sum_v w'_tt_v = \alpha_v$.  A corresponding
gauge transformation $f:V\to\R_{>0}$ solves the criticality equations (\ref{criticalwts}) with $x_v=f(v)$.
\end{thm}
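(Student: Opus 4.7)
The plan is to realize the required gauge transformation $f = e^{X^*}$ as the unique minimizer of a convex functional. Define $\Phi:\R^V\to\R$ by
\[
\Phi(X)\;=\;F(X)\;-\;\sum_v \alpha_v X_v,\qquad F(X)=\log P(e^{X_1},\dots,e^{X_V}).
\]
Note that $\partial_{X_v}\Phi=\frac{x_v(P)_{x_v}}{P}-\alpha_v$ with $x_v=e^{X_v}$, so $\nabla\Phi(X^*)=0$ is exactly the system (\ref{criticalwts}). After rescaling to $P(x^*)=1$, the rewritten form (\ref{simplecriteqns}) reads $\sum_t w'_t t_v=\alpha_v$ with $w'_t=w_t\prod_u f(u)^{t_u}$ and $f(v)=x^*_v$, which is the desired condition.

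$F$ is the log of a sum of exponentials of linear forms in $X$ and is therefore convex, so $\Phi$ is convex. Two flat directions are built in. First, if $Y\in\ker(D^*)$ then $(D^*Y)_t=\sum_u Y_u t_u=0$ for every tile, so every monomial of $P$ is invariant under $X\mapsto X+Y$; and since $\vec\alpha\in\M_\R=\Im(D)=\ker(D^*)^{\perp}$, also $\langle\vec\alpha,Y\rangle=0$. This is the gauge invariance. Second, the uniform shift $X\mapsto X+s\mathbf{1}$ adds $s\delta$ to $F$ and, by (\ref{alphasum}), $s\delta$ to $\langle\vec\alpha,X\rangle$, so again $\Phi$ is invariant; modding it out amounts to imposing the normalization $P=1$. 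Modulo these directions, $F$ is strictly convex, because its Hessian is the covariance matrix of the random tile-vector $(t_v)_v$ under the Gibbs measure on $T$ with weights proportional to $w_t\prod_u e^{X_u t_u}/t_u!$, and its null space is exactly the span of $\ker(D^*)$ and $\mathbf{1}$.

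The main obstacle is coercivity. Along a ray $X=su$,
\[
\frac{\Phi(su)}{s}\;\longrightarrow\;\max_{t\in T}\langle v_t,u\rangle\;-\;\langle\vec\alpha,u\rangle.
\]
Write $\vec\alpha=\sum_t c_t v_t$ with $c_t\ge 0$; since all tiles have size $\delta$ and $\sum_v\alpha_v=\delta$, summing gives $\sum_t c_t=1$, so $\langle\vec\alpha,u\rangle\le\max_t\langle v_t,u\rangle$ with equality only when $u$ is constant on the tiles realizing the maximum. When $\vec\alpha$ lies in the relative interior of $\M_\R$ (the natural regime; the boundary case reduces to the interior of the sub-cone spanned by the tiles with $c_t>0$), this inequality is strict for every $u$ transverse to the two flat directions. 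Convexity plus coercivity plus lower semicontinuity then produces a minimizer $X^*$, unique modulo the flat directions.

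At $X^*$ the vanishing of $\nabla\Phi$ is exactly (\ref{criticalwts}), and setting $f(v)=x^*_v/P(x^*)^{1/\delta}$ produces $w'_t=w_t\prod_u f(u)^{t_u}$, gauge equivalent to $w$ and satisfying $\sum_t w'_t t_v=\alpha_v$. Uniqueness of $w'$ among gauge-equivalent weight functions is built in: any two minimizers $X^*$ differ by an element of $\ker(D^*)$, the kernel of the gauge action on weights, so they yield the same $w'$; and the remaining $\mathbf{1}$-ambiguity is killed by the normalization $\sum_t w'_t=1$, which is forced by summing the condition $\sum_t w'_t t_v=\alpha_v$ over $v$ and using (\ref{alphasum}).
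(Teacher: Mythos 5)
Your proof follows essentially the same route as the paper: both realize the critical gauge as the minimizer of the convex functional $\Phi(X)=\log P(e^{X})-\sum_v\alpha_vX_v$ (equivalently, via the Legendre dual of the free energy $F$), identify the flat directions as $\ker(D^*)$ together with the constant vector $\mathbf{1}$, use the Hessian-as-covariance argument for strict convexity transverse to these, and read off (\ref{criticalwts}) from $\nabla\Phi=0$. The one place you go beyond the paper is the explicit coercivity argument along rays guaranteeing that the minimizer is actually attained---the paper simply asserts this---and your observation that attainment genuinely requires $\vec\alpha$ to lie in the relative interior of $\M_\R$ is correct and worth keeping: on the boundary no strictly positive gauge-equivalent $w'$ can satisfy $\sum_t w'_t t_v=\alpha_v$ (summing over $v$ forces $\sum_t w'_t=1$ and $\sum_t w'_tv_t=\vec\alpha$ with all $w'_t>0$), so the theorem implicitly assumes interiority.
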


We call $w'$ of this theorem the (weight function in the) \emph{critical gauge}. 

\begin{proof} 
Define $X_v=\log x_v$. 
The free energy $F(X_1,\dots,X_V) := \log P(e^{X_1},\dots,e^{X_V})$ is a smooth function of the $X_i$'s. 
It has gradient lying in $\Im(D)$: its gradient is 
$$\nabla F = (\frac{x_1P_{x_1}}{P},\dots,\frac{x_TP_{x_T}}{P}) = \frac1PD(\sum_tw'_te_t),$$
where $w'_t = w_t\prod_{v\in t}x_v$.

Moreover we claim that $F$ is convex.
If we interpret $P$  (after scaling so that $P(1)=1$) as the probability generating function for a $\R^V$-valued random variable $Y$,
then the Hessian matrix $H$ of $F$ is $H_F=(\frac{\partial^2 \log P}{\partial X_u\partial X_v})_{u,v\in V}$
is the covariance matrix of $Y$, hence positive semidefinite. 
$F$ is strictly convex on directions in $\Im(D)$, as these are directions where the variance is positive,
 and $F$ is constant on directions
in $\ker(D^*)$, that is, those perpendicular to $\Im(D)$.

Let $S(\vec\alpha)$ be the Legendre dual of $F$: for $\vec\alpha\in\M_\R\subset\Im(D)$, we have
$$S(\alpha_1,\dots,\alpha_V)= \max_{X_1,\dots,X_V} \left\{-\log P(e^{X_1},\dots,e^{X_V}) +\alpha_1X_1+\dots+\alpha_VX_V\right\}.$$
Then $S$ is strictly convex and defined on all of $\M_\R\cap\{\sum_v\alpha_v=\delta\}$.  We have 
$$\alpha_v = \frac{\partial}{\partial X_v}\log P(e^{X_1},\dots,e^{X_V})$$
so that the criticality equations are satisfied with $\x_v = e^{X_v}$. Comparing with (\ref{sigma}) we see that 
$-S(\vec\alpha) = \sigma(w,\vec\alpha)$ is the growth rate function. 

Here the maximizing $X_v$
are unique up to a global additive constant and up to translations in $\ker D^*$; the latter 
correspond precisely to gauge transformations not
changing the tile weights. The former allow us to scale all weights so that $P(1)=1$. 
After this scaling (\ref{criticalwts}) or (\ref{simplecriteqns}) say precisely that the sum of weights of tiles containing $v$ (counted with multiplicity)
is $\alpha_v$.
\end{proof}

Since $\sigma$ is strictly concave, a solution to (\ref{criticalwts}) can be found by maximizing $\sigma$,
written as a function of the $x$'s (even though $\sigma$ is not strictly concave when written as a function of the $x$'s
since it is constant on directions in $H_1$, it is strictly concave on orthogonal directions.)

\begin{cor}\label{probs} For the critical gauge $w'$, tile probabilities are proportional to tile weights, that is, the expected number of 
tiles of type $t$ is $Kw'_t=Kw_t\x_t$, where $K$ is the total number of tiles.
\end{cor}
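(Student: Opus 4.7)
The plan is to compute $\mathbb{E}[m(t)]$ by logarithmic differentiation of $Z(w,\N)$ in $w_t$, substitute the saddle-point asymptotics of (\ref{sigma}), and use an envelope cancellation provided by the criticality equations (\ref{criticalwts}) to identify the limit with $w'_t$.

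I would start from the standard exponential-family identity
$$\mathbb{E}[m(t)] \;=\; w_t\,\frac{\partial}{\partial w_t}\,\log Z(w,\N),$$
valid for any fixed $\N$ (and note that since all tiles have size $\delta$, the number $K$ of tiles is determined by $\N$ and independent of $w$). Taking $\N\to\infty$ with $N_v/K\to\alpha_v$ and invoking (\ref{sigma}) yields
$$\frac{1}{K}\,\mathbb{E}[m(t)] \;\longrightarrow\; w_t\,\frac{\partial\sigma(w,\vec\alpha)}{\partial w_t}.$$

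Next I would differentiate $\sigma(w,\vec\alpha)=\log P-\sum_v\alpha_v\log x_v$ evaluated at a critical point $(x_v)$ solving (\ref{criticalwts}). The saddle $x_v=x_v(w)$ depends on $w_t$, but an envelope-theorem argument removes the implicit dependence: the coefficient of each $\partial_{w_t}x_v$ is $\bigl(x_v P_{x_v}/P - \alpha_v\bigr)/x_v$, which vanishes on the critical set by (\ref{criticalwts}). Only the explicit appearance of $w_t$ in $P$ survives, and since $\partial_{w_t}P=x_t=\prod_v x_v^{t_v}/t_v!$, one obtains
$$w_t\,\frac{\partial\sigma}{\partial w_t}\;=\;\frac{w_t\,x_t}{P}.$$

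To finish, I would match this with the critical gauge. Rescaling the $x_v$'s by a common factor so that $P=1$ (a freedom noted in the proof of Theorem \ref{divgauge}), the criticality equations reduce to $\sum_t (w_t x_t)\,t_v=\alpha_v$ for every $v$; by the uniqueness statement in Theorem \ref{divgauge}, the weights $\tilde w_t := w_t x_t$ are exactly the critical-gauge weights $w'_t$. Hence $\mathbb{E}[m(t)] = K w_t x_t = K w'_t$, as stated. The main obstacle is justifying differentiation under the saddle-point limit, i.e.\ uniformity in $w$ of the large-$K$ asymptotics of $(1/K)\log Z(w,\N)$. This follows from the smoothness and strict convexity of the free energy $F$ on $\Im(D)$ established in Theorem \ref{divgauge}, hence smoothness of its Legendre dual $-\sigma$; once this is in place the envelope identity above yields the result immediately.
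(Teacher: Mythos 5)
Your argument is correct and rests on the same saddle-point computation that the paper itself relies on: the paper states this corollary without a separate proof, reading it off from Theorem \ref{divgauge} together with the interpretation (made explicit later, in the Gaussian fluctuations section) of $P$, scaled so that $P=1$, as the probability generating function of a single tile, so that $P^K$ governs $K$ tiles and the expected count of type $t$ is $Kw_tx_t=Kw'_t$. Your packaging via logarithmic differentiation of $Z(w,\N)$ in $w_t$ and the envelope cancellation coming from (\ref{criticalwts}) is a clean, explicit version of that implicit step, and you correctly isolate the only real analytic point --- uniformity in $w$ of the large-$K$ asymptotics --- and dispose of it via the smoothness and strict convexity of $F$ on $\Im(D)$ established in Theorem \ref{divgauge}.
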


One consequence of this corollary is that there is, for any choice of tile probabilities (satisfying the 
necessary condition of summing to $\alpha_v$ at vertex $v$ for each $v$), a choice of tile weights $w_t$,
unique up to gauge, for which the multinomial tiling model has those tile probabilities.

\subsection{Example}

Consider tilings of $\G_1 = \{1,2,3,4,5\}\subset\Z$ with tiles consisting of single vertices and
pairs of adjacent vertices:
the tiles are $T=\{1,2,3,4,5,12,23,34,45\}.$
We add dummy vertex $v_0$ and let $\G=\G_1\cup\{v_0\}$ where $0$ is connected to all vertices of $\G_1$. 
Suppose $N_i=N$ for $i\ne v_0$, and all tile weights are $1$. Then
$$P = x_0(x_1+x_2+x_3+x_4+x_5) + x_1x_2+x_2x_3+x_3x_4+x_4x_5.$$
We have $N_0+5N=2K$. Let $\alpha = N/K$ and $\alpha_0=N_0/K$ (note $\alpha_0+5\alpha=2=\delta$). 

The feasible range of $\alpha$ is $\alpha\in[\frac15,\frac13]$:
when $\alpha=1/5$, $K=5N$ and we need to use only singleton tiles,
and when $\alpha=1/3$, $K=3N$ and we need to use the maximum proportion of long tiles (which is two long tiles for every singleton tile); moreover the 
singleton tiles must be
$x_1,x_3$ or $x_5$.

Solving
the criticality equations (\ref{criticalwts}) we find the tile probabilities
\begin{align*}x_0x_1&=\frac14(1+3\alpha-\sqrt{1-10\alpha+41\alpha^2})\\
x_0x_2&=\frac12(1-3\alpha)\\
x_0x_3&=\frac12(1-7\alpha+\sqrt{1-10\alpha+41\alpha^2})\\
x_1x_2&=\frac14(-1+\alpha+\sqrt{1-10\alpha+41\alpha^2})\\
x_2x_3&=\frac14(-1+9\alpha-\sqrt{1-10\alpha+41\alpha^2})\\
\end{align*}
and the remaining probabilities are given by symmetry. 

Tile probabilities are plotted in Figure \ref{tileprobs5}.
\begin{figure}
\begin{center}\includegraphics[width=2.5in]{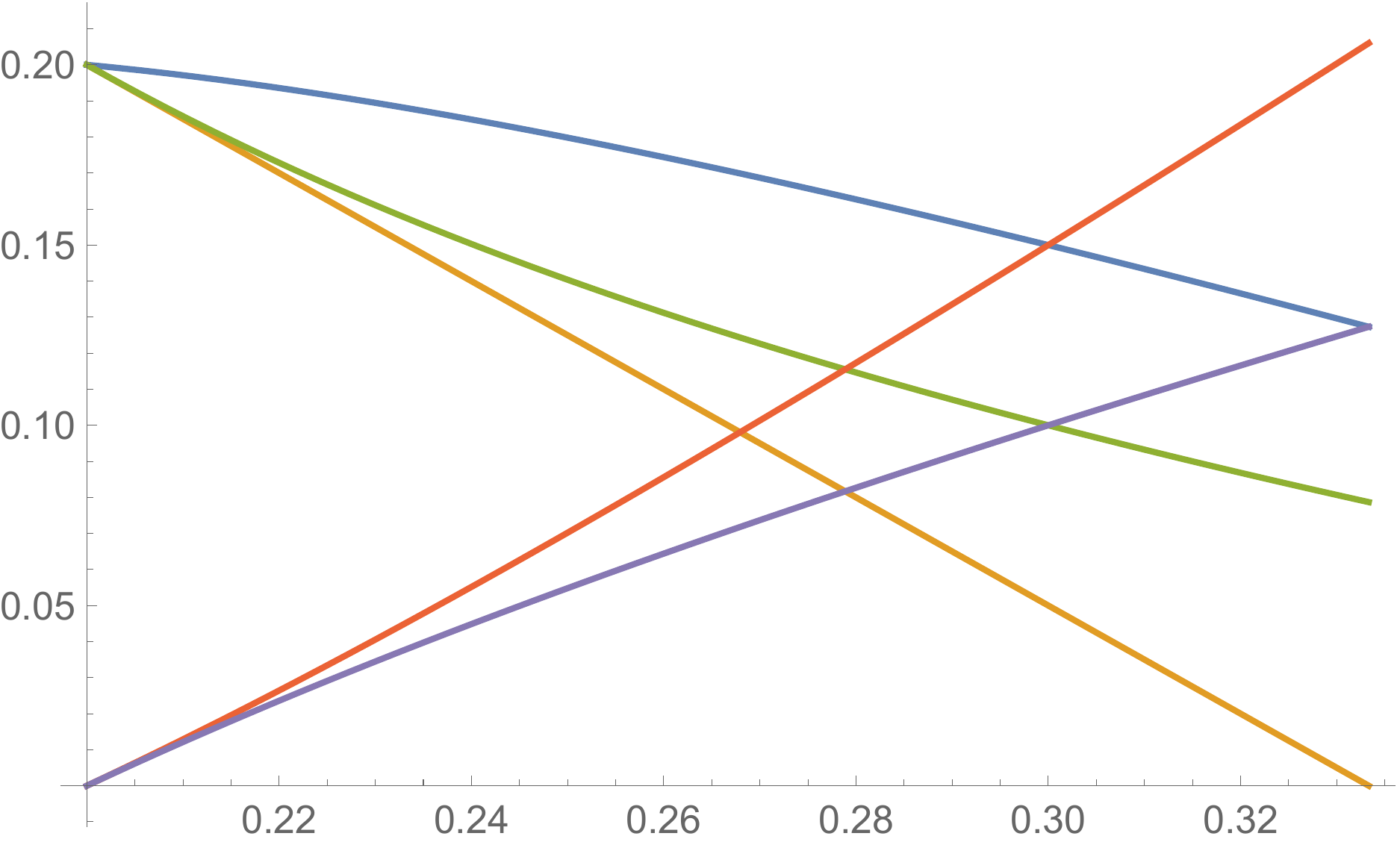}\end{center}
\caption{\small{\label{tileprobs5}Tile probabilities for $\alpha\in[1/5,1/3]$. $x_0x_1$ in blue, $x_0x_2$ in orange, $x_0x_3$ green, $x_1x_2$ red, $x_2x_3$ purple. }}
\end{figure}

\subsection{Example}\label{ex2}

Here is an example with nontrivial homology.
Consider tilings of a cycle of length $4$: $\{1,2,3,4\}$ with dimers $\{12,23,34,41\}$.
Then $P=x_1x_2+x_2x_3+x_3x_4+x_4x_1=(x_1+x_3)(x_2+x_4)$.
The space $\Im(D)\subset \R^V$ is the orthocomplement of the vector $(1,-1,1,-1)$, so 
$H_1(T,\R)$ has rank $1$ and is generated by this vector.
The feasible $\vec\alpha$ are those which satisfy $\sum_v\alpha_v=2$ and are in $\Im(D)$, that is,
satisfy the conditions $\alpha_1+\alpha_3=1=\alpha_2+\alpha_4$.
The criticality equations are 
$$\frac{x_1x_2+x_1x_4}{P}=\alpha_1,\frac{x_1x_2+x_2x_3}{P}=\alpha_2, \frac{x_2x_3+x_3x_4}{P}=\alpha_3,
\frac{x_1x_4+x_3x_4}{P}=\alpha_4,$$
which reduce to 
$$\frac{x_1}{x_1+x_3}=\alpha_1,~~\frac{x_2}{x_2+x_4}=\alpha_2.$$
Solutions are not unique: given any solution we can multiply $x_1,x_3$ by a constant $t$ and divide $x_2,x_4$ by $t$ to get another solution.
We have
$$F(X_1,\dots,X_4) = \log((e^{X_1}+e^{X_3})(e^{X_2}+e^{X_4})).$$
This leads to the growth rate 
\begin{align*}\sigma(\vec\alpha) &= -\alpha_1\log\alpha_1-\alpha_2\log\alpha_2-\alpha_3\log\alpha_3-\alpha_4\log\alpha_4 \\
&= -\alpha_1\log\alpha_1-\alpha_2\log\alpha_2-(1-\alpha_1)\log(1-\alpha_1)-(1-\alpha_2)\log(1-\alpha_2)\\
&= h(\alpha_1)+h(\alpha_2)
\end{align*}
where $h(p)$ is the Shannon entropy $h(p)=-p\log p -(1-p)\log(1-p).$

\section{Dimers}

A special case of the multinomial tiling model is the \emph{multinomial dimer model}, where tiles are
simply all pairs of adjacent vertices (also known as ``dimers"). A $1$-dimer tiling is
then a perfect matching, also known as \emph{dimer cover} of $\G$.

\subsection{Bipartite graphs}

For the dimer model, when $\G$ is bipartite, $V=B\cup W$, there is an
equivalent but perhaps more efficient method of computing $Z(w,\N)$. 
By Corollary \ref{probs} we need to look for a gauge function $x:V\to\R_{>0}$
such that 
\be\label{beq}\alpha_\b = \sum_{\w\sim\b}w_{\w\b}x_\w x_\b\ee and likewise for white
vertices. However we can use (\ref{beq}) to define $x_\b$:
$$x_\b = \frac{\alpha_\b}{ \sum_{\w\sim\b}w_{\w\b}x_\w}.$$
Then we only have equations involving the remaining half the variables: those at the white vertices $\x_\w$.
These equations are
\be\label{saddlebip}\alpha_\w = \sum_{\b\sim\w}\alpha_\b\frac{w_{\w\b}x_\w}{\sum_{\w\sim\b}w_{\w\b}x_\w}.\ee

In the standard case where $N_v\equiv N$, all the $\alpha_\w,\alpha_\b$ are equal
and the saddle point equations correspond to the property that the \emph{sum of (critical) edge 
weights at each vertex is $1$}. This is just a restatement of Cororollary \ref{probs} in this 
setting, since the sum of edge probabilities at each vertex is $1$ for a 
random dimer cover.

\subsection{Aztec Diamond Example}

The \emph{Aztec diamond} of order $n$ is a diamond-shaped subregion of $\Z^2$ of horizontal diameter $2n-1$; see Figure \ref{AD}, left panel for the $n=4$ Aztec diamond. It is known to have $2^{n(n+1)/2}$ single-dimer covers (see \cite{EKLP} and \cite{EKLP2}). 
Consider $\N$-fold dimer covers with $N_v\equiv N$ and $w_t\equiv 1$. The critical edge weights sum to $1$ at each vertex,
and have the property that around each square face the weights $a,b,c,d$ satisfy $ac=bd$. 
The critical weights for $n=4$ are shown on the left, and for general $n$ (scaled by $n(n-1)$) on the right
in Figure \ref{AD}.

\begin{figure}
\begin{center}\includegraphics[width=2.7in]{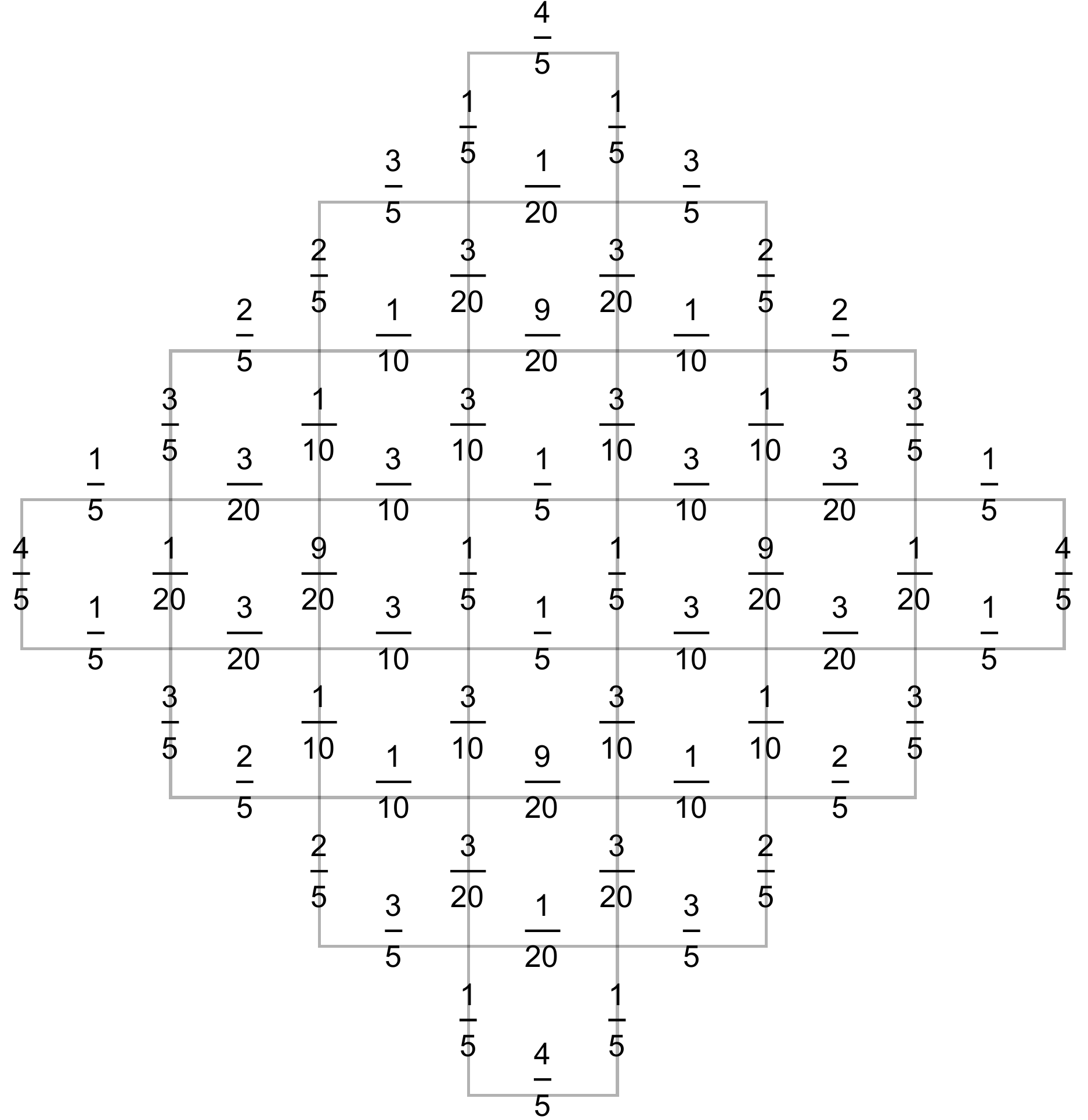}\hskip1cm\includegraphics[width=3in]{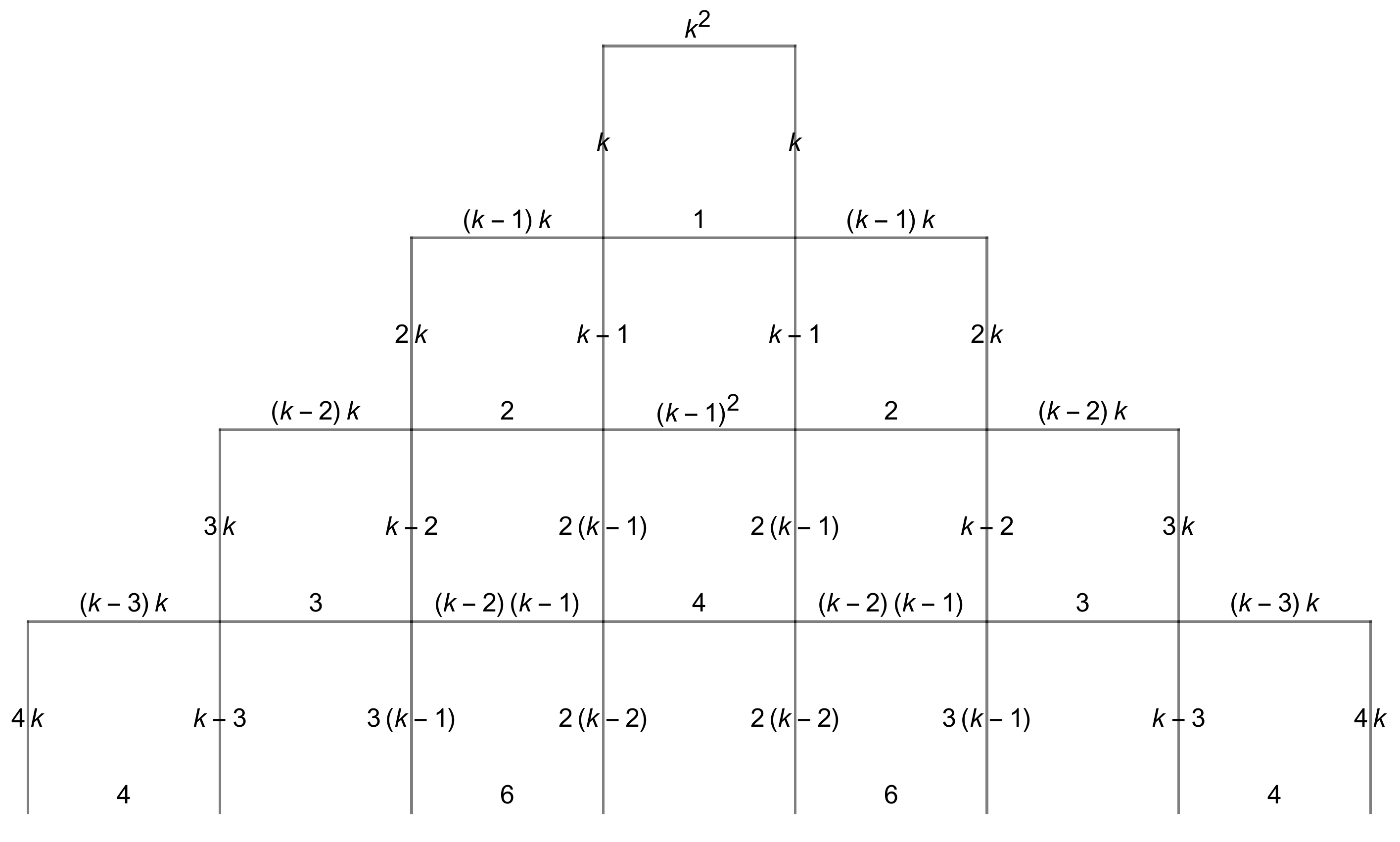}\end{center}
\caption{\small{\label{AD}Aztec diamond of order $4$ (left) in critical gauge. Critical gauge edge weights for general $k$, multiplied by $k(k+1)$ (right). The weight on an edge is a quadratic function of its $x,y$ coordinates and its parity. Out of a white vertex at $x,y$ the edge weights E,N,W,S are
$1/k(k+1)$ times respectively  $(x+y)(x-y), (x+y)(k+1-x+y),(k-x-y)(k+1-x+y),(k-x-y)(x-y)$.}}
\end{figure}

We can work out the growth rate $\sigma$ in this case as follows.
Since $P=1$, and $\alpha_v=\frac{1}{k(k+1)}$ is a constant, we have
$$\sigma=-\frac{1}{k(k+1)}\sum_v\log x_v = -\frac{1}{k(k+1)}\log \prod x_v.$$
This product is the weight of any single dimer cover. The ``all horizontal" dimer cover
has dimers of weight $\frac1{k(k+1)}$ times:  $k^2$ for the top row, $k(k-1)$ and $(k-1)k$ for the next row,
and generally $k(k-i+1),(k-1)(k-i+2),\dots,(k-i+1)k$ for the $i$ row, for $i$ from $1$ to $k$,
then repeating for the bottom half of the diamond. 
The total product of edge weights of the dimer cover is 
$$\frac{(k^k\cdot(k-1)^{k-1}\cdots2^2\cdot 1)^4}{(k(k+1))^{k(k+1)}}$$
This yields for the exponential growth rate the remarkable value
$\sigma=1+O(\frac{\log k}{k^2})$.

Associated to a multinomial dimer cover with constant $N_v\equiv N$ of a subgraph of $\Z^2$  
is a \emph{height function} $h$ on the dual graph. The height function is defined to be zero on a fixed face,
and the change in height across an edge $\w\b$ (when crossing the edge so that the white vertex is on the left)
is $-N/4$ plus the number of dimers on that edge.
In \cite{CEP}, see also \cite{CKP}, the authors prove a limit shape phenomenon for single dimer covers:
the (rescaled by $n$) height function for a random dimer cover of $AD(n)$ converges with probability one as $n\to\infty$
to a nonrandom piecewise analytic function on the rescaled diamond $|x|+|y|\le1$. For the $N$-fold dimer cover discussed above we
get a similar, but analytic, limit shape. It is just the function $h(x,y) = x^2-y^2$.
The general limit shape phenomenon for multinomial tilings is discussed in \cite{KP2}.

\subsection{Path example}

For fixed $n>0$ take $\G$ to be the (bipartite) $n\times n$ honeycomb graph of Figure \ref{2nnhoneycomb}, with edge weights $1$. 
\begin{figure}
\begin{center}\includegraphics[width=2.5in]{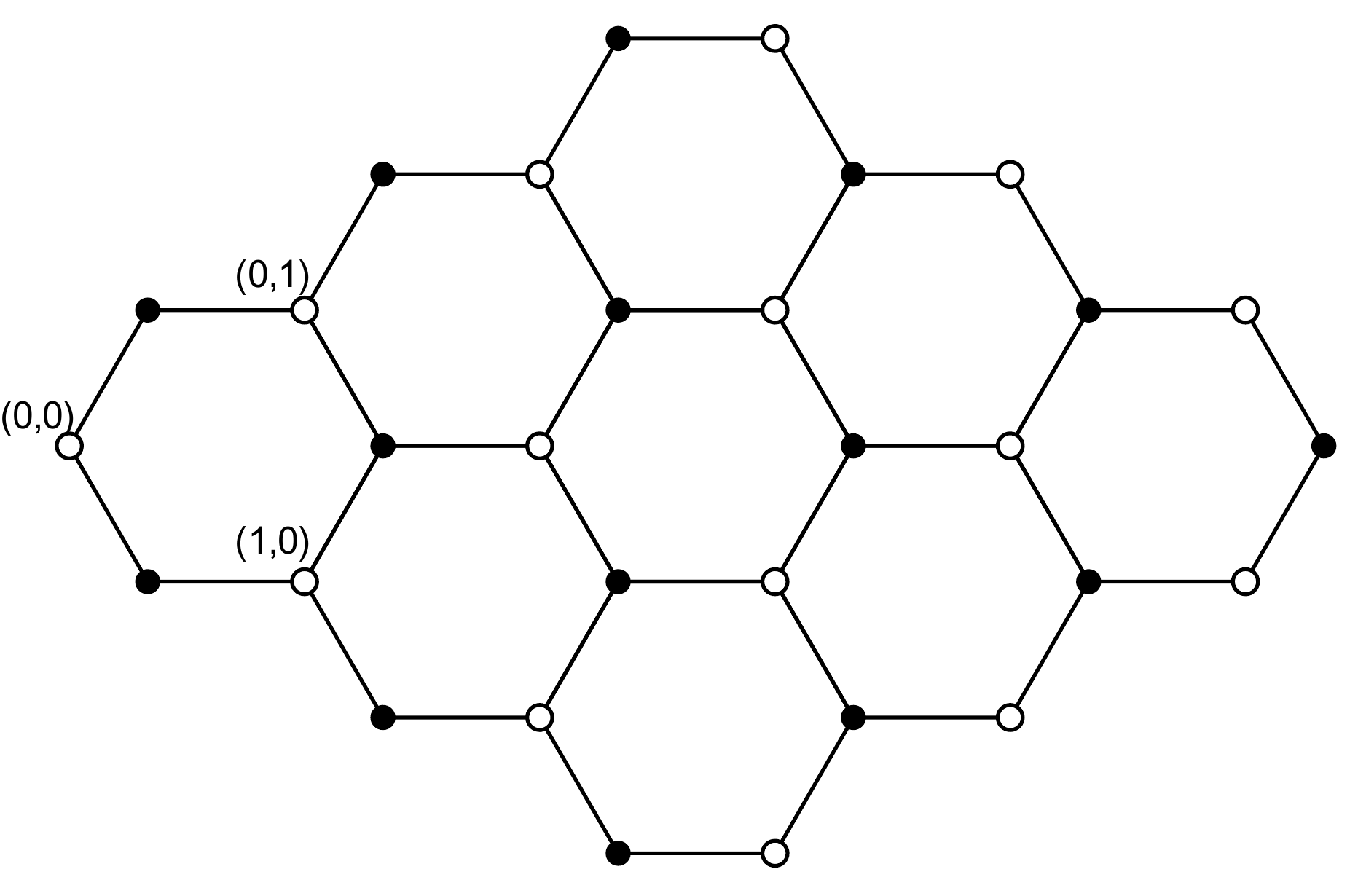}\end{center}
\caption{\small{\label{2nnhoneycomb}$3\times 3$ honeycomb graph. }}
\end{figure}
There are
$\binom{2n}{n}$ dimer covers: dimer covers correspond bijectively to monotone lattice paths from $(0,0)$ to $(n,n)$. The bijection is obtained by taking a dimer cover and shrinking all horizontal edges of $\G$ to points.

Let us consider $\N$-dimer covers of $\G_n$, where $N_v\equiv N$ and $w_t=1$. 
Index the white vertices $x_{i,j}$ as in the figure.
The criticality equations (\ref{saddlebip}) are
\be\label{3sum}\frac{x_{i,j}}{x_{i,j}+x_{i-1,j}+x_{i,j-1}} + \frac{x_{i,j}}{x_{i,j}+x_{i+1,j}+x_{i+1,j-1}}+\frac{x_{i,j}}{x_{i,j}+x_{i,j+1}+x_{i-1,j+1}}=1\ee
and boundary conditions $x_{i,j}=0$ for $i<0$ or $j<0$ or $(i,j)=(n,n)$.

In the limit $n\to\infty$, there is a solution to (\ref{3sum}) given by
$$x_{i,j} = (i+j)! \binom{i+j}{i}.$$
However we don't know if this solution is the only one (since the graph is infinite, unicity does not necessarily hold).

The edge probabilities for this solution are (for horizontal, NE, SE edges respectively out of a white vertex $(i,j)$)
$$\frac{i+j}{i+j+1},~~~ \frac{j+1}{(i+j+1)(i+j+2)},~~~ \frac{i+1}{(i+j+1)(i+j+2)}$$ for the (horizontal, resp. NE, resp. SE)
edge at $(i,j)$.

These edge probabilities give a unit flow on $\NN\times\NN$ from $(0,0)$ to $\infty$, see Figure \ref{NNflow} left panel; the value on an edge represents the flow from left to right along that edge.
\begin{figure}
\begin{center}\includegraphics[width=1.5in]{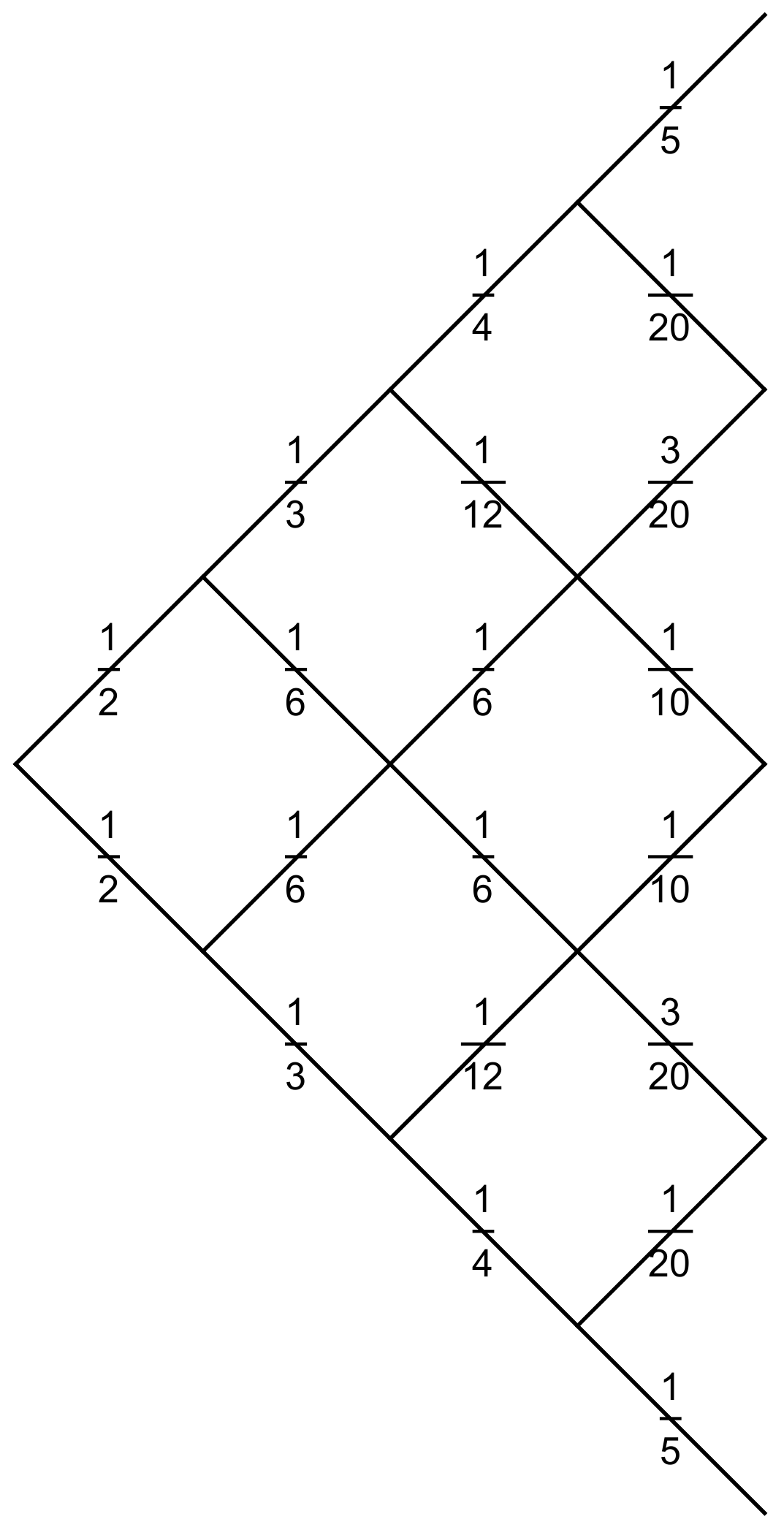}\hskip1cm\includegraphics[width=1.5in]{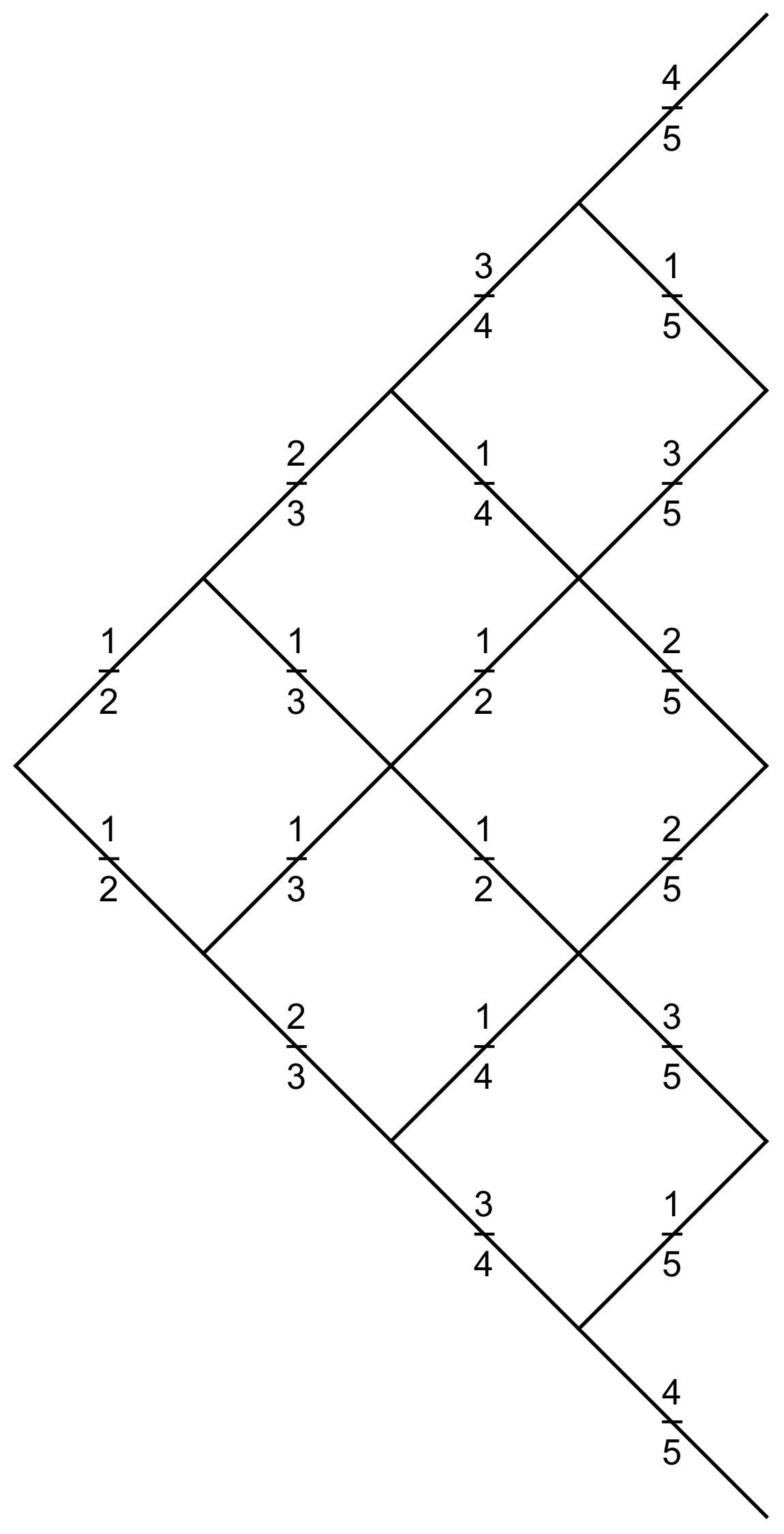}\end{center}
\caption{\small{\label{NNflow}Edge probabilities (left). Corresponding random walk probabilities (right).}}
\end{figure}
The value on an edge is the probability that a certain monotone random walk uses that edge.
The transition probabilities of this random walk are shown on the right in Figure \ref{NNflow};
this random walk is the \emph{Polya urn}\footnote{An urn starts out with one red and one green ball. A ball is selected at random and replaced with another ball of the same color. This process is then repeated many times. The resulting distribution of the number of red balls after $k$ steps is uniform on $[1,k]$.}.

\subsection{Example in higher dimensions}

For a higher dimensional multinomial dimer example, consider the infinite subgraph of $\Z^3$ in the slab $0\le x+y+z\le n$ where $n$ is even,
and $w\equiv 1$. To get a finite graph we can quotient by a cofinite sublattice of $\{x+y+z=0\}$.
Such a graph has a higher proportion of white vertices than black vertices (assuming the origin is white);
the density ratio is $(n+1)/n$.
Take $N_\w$ constant, and $N_\b$ a different constant with $N_\w/N_\b= n/(n+1)$. 
Then a critical gauge is given up to scale by: 
for $\w=(x,y,z),$  $x_{\w\b} = n-x-y-z$ if $b=\w+e_i$ and $x_{\w\b} = x+y+z$
if $\b=\w-e_i$ (here $e_1,e_2,e_3$ are the standard basis vectors).

There are analogous examples in all dimensions $d\ge 1$.

\section{Multiplicities and fluctuations}

\subsection{Changing multiplicities}\label{changemults}

We compute the change in growth rate $\sigma$ (from equation (\ref{sigma})) under a small change in
the multiplicities $\alpha_v\to\alpha_v+d\alpha_v$. This will be used below to compute tile covariances.
Since $\sum_v\alpha_v=\delta$, 
the sum of changes is necessarily zero: $\sum_v d\alpha_v = 0$.

Recall the incidence matrix $D=(D_{v,t})_{v\in V,t\in T}$, defined by 
$D_{v,t}=t_v$.
Differentiating (\ref{simplecriteqns}) we find for each vertex $u$:
$$\sum_{t}w_tt_ux_t(\sum_vt_v\frac{dx_v}{x_v})= d\alpha_u,$$
or
$$\sum_{v,t}D_{u,t}w_tx_tD_{t,v}\frac{dx_v}{x_v} = d\alpha_u.$$
Thus
\be\label{Delta}\sum_{v}\Delta_{u,v}\frac{dx_v}{x_v} = d\alpha_u,\ee
where $\Delta = DCD^*$ is the tiling laplacian for the critical weights.
Equation (\ref{Delta}) says that as a function of $v$, $\frac{dx_v}{x_v}$ is harmonic with respect to the laplacian
$\Delta$ at all vertices $u$ for which $d\alpha_u=0$.

Equation (\ref{Delta}) will have a solution if and only if $d\vec\alpha$ is in the image of $\Delta$, which is the same as 
$\Im(D)$,
since the laplacian is invertible on $\Im(D)$, mapping it to itself (and $\Delta$ is zero on $\ker D^*$). The solution is unique up to an element of
$\ker\Delta = \ker D^*=H_1(T,\R)$; as discussed in Section \ref{gaugesection} these correspond to gauge transformations not changing the tile weights.

\subsection{Covariance of tile densities}\label{covsection}

Let $X_t$ be the random variable counting the number of occurrences of tile $t$ in an $\N$-fold tiling.
We wish to compute the covariance $\Cov(X_t,X_{t'}) = \E[X_tX_{t'}]-\E[X_t]\E[X_{t'}]$
for two tiles $t,t'$.

Note that $X_t$ is itself a sum of $\{0,1\}$-valued random variables, $X_t = \sum_{i=1}^{M_t} X_t^i$, where the sum runs over all $M_t=\prod_{v}\binom{N_v}{t_v}$ possible lifts 
of the tile $t$ to $\G_\N$.
It suffices to compute the covariance $\Cov(X_t^i,X_{t'}^j).$ 

\subsubsection{The case $t\ne t'$.}
Assume first that $t\ne t'$. 
By the symmetry of $\G_\N$, if $t$ and $t'$ are disjoint
this is independent of $i$ and $j$:
$\Cov(X_t^i,X_{t'}^j) = \Cov(X_t^1,X_{t'}^1)$. (If $t,t'$ overlap, see below.)
We have 
\begin{align*}
\E[X_t^1X_{t'}^1] &= \Pr(X_t^1=1,X_{t'}^1=1) \\
&= \Pr(X_{t'}^1=1|X_t^1=1)\Pr(X_t^1=1) \\
&= \Pr\!{}^*(X_{t'}^1=1)\Pr(X_t^1=1),
\end{align*}
where the star denotes the probability measure on the graph $\G_{\N^*}$ where we have reduced the multiplicities of vertices $v$ by $t_v$.
We thus have 
$$\E[X_tX_{t'}] = M_tM_{t'}\E[X_t^1X_{t'}^1] = \E[X_t]\frac{M_{t'}}{M^*_{t'}}\E^*[X_{t'}]=\E[X_t]\E^*[X_{t'}],$$
since $M_{t'}=M^*_{t'}$ when $t,t'$ are disjoint.

When $t$ and $t'$ overlap, the number of disjoint lifts of $t$ and $t'$ is
$$M_{t,t'} := \prod_v\binom{N_v}{t_v,t'_v} =\prod_v\binom{N_v}{t_v}\binom{N_v-t_v}{t'_v}= M_t\prod_{v}\binom{N_v-t_v}{t'_v}.$$
The calculation follows as before but we need to sum over the $M_{t,t'}$ lifts.
We find
$$\E[X_tX_{t'}]  = M_{t,t'}\E[X_t^1X_{t'}^2] = \E[X_t]\E^*[X_{t'}],$$
since the number of lifts of $t'$ in $\G_{\N^*}$ is exactly 
$$\prod_{v}\binom{N_v-t_v}{t'_v}=\frac{M_{t,t'}}{M_{t}}.$$

Thus in either case, if $t\ne t'$,
$$\Cov(X_t,X_{t'}) = \E[X_t](\E^*[X_{t'}]-\E[X_{t'}]).$$

\subsubsection{The case $t=t'$.}
Finally, if $t=t'$ are the same tile, then we have a slightly different computation. Let $X_{t,1}$ and $X_{t,2}$ correspond to disjoint lifts of $t$. With $p=\E[X_{t,1}]$ and 
$p^*=\E^*[X_{t,2}]$, 
\begin{align*}\E[X_t^2] - \E[X_t]^2 &= M_t\E[X_{t,1}]+M_t(M_t-1)\E[X_{t,1}X_{t,2}]-M_t^2\E[X_{t,1}]^2\\
&=M_tp + M_t(M_t-1)pp^*-(M_tp)^2\\
&=\E[X_t](1-p+\frac{M_t-1}{M_t}(\E^*[X_t]-\E[X_t])).
\end{align*}

\subsubsection{Computation of $\E^*$.}
We can now compute $\E^*[X_{t'}]$ from the methods of section \ref{changemults}.
We need to change $\alpha_v=\frac{N_v}{K}$ to $\frac{N_v-t_v}{K-1}$.
Thus
$$d\alpha_v = \frac{\alpha_v}{K-1}-\frac{t_v}{K-1}.$$
Recalling $\E[X_{t'}] = Kw_{t'}x_{t'},$ we have (ignoring lower order terms)
\begin{align}
\E^*[X_{t'}]-\E[X_{t'}] &= (K-1)w_{t'}x_{t'}(1+\sum_{u}t'_u\frac{dx_u}{x_u})-Kw_{t'}x_{t'}\notag\\
&=w_{t'}x_{t'}(-1+(K-1)\sum_{u}t'_u\frac{dx_u}{x_u})\notag\\
&=w_{t'}x_{t'}(-1+(K-1)\sum_{u,v}D_{t',u}\Delta_{u,v}^{-1}d\alpha_v)\notag\\
&=w_{t'}x_{t'}(-1+ \sum_{u,v}D_{t',u}\Delta_{u,v}^{-1} (\alpha_v-D_{v,t}))\notag\\
&=w_{t'}x_{t'}(-1- (D^*\Delta^{-1} D)_{t',t} + \sum_{u,v}D_{t',u}\Delta^{-1}_{u,v}\alpha_v).\label{last}
\end{align}

Now note that 
$$\sum_v\Delta_{u,v} = \sum_{v,t} D_{u,t}w_tx_tD_{t,v}= \delta\sum_{t} D_{u,t}w_tx_t=\delta\alpha_u,$$
so $\Delta^{-1}_{u,v}\alpha_v=\frac1{\delta}1_u$ and thus
$\sum_{u,v}D_{t',u}\Delta^{-1}_{u,v}\alpha_v = 1$. The last sum in (\ref{last}) cancels the $-1$ and we have
$$\E^*[X_{t'}]-\E[X_{t'}] = -w_{t'}x_{t'}\K_{t',t},$$
where $\K=D^*\Delta^{-1}D$,
and so for $t\ne t'$ at critical gauge
$$\Cov(X_t,X_{t'}) =-Kw_tw_t'\K_{t',t}.$$
Likewise when $t=t'$,
$$\Var(X_t) = Kw_t(1-p-w_t\K_{t,t}).$$
As long as $\delta>1$, we can ignore the $p$ for large $K$, and write
\be\label{covform}\Cov(X_t,X_{t'}) = Kw_t(I-C\K)_{t',t}\ee
where $I$ is the identity matrix.
Note that $C\K=(C\K)^2$ is a projection matrix from $\R^T$ onto the subspace $C\Im(D^*)$, with kernel 
$\ker(D)$.
Thus $I-C\K$ is the complementary projection.

\subsection{Gaussian fluctuations}

For the multinomial tiling model with multiplicities $\N$, as before let $X_t$ be the random variable
representing the multiplicity of tile $t$. We consider a limit $K\to\infty$
as in Section \ref{asympsection}. Scaling $P$ to $1$, we can consider $P$
to be the probability generating function (pgf) of a single tile.
Then $P^K$ is the pgf of placing $K$ i.i.d.\! tiles. The tile multiplicities
under this process are Poisson$(Kw_t)$ random variables which tend
in the limit of large $K$ to Gaussian random variables which are
independent except for the constraint that their sum is $K$.
When we impose the constraints on the multiplicities $N_v$
this is an additional linear constraint (which implies the first); 
the resulting random variable is
thus also a joint Gaussian. A (multidimensional) Gaussian is determined by its covariance matrix. 
The covariance can in general be obtained from the 
original covariance matrix and the constraint matrix. 
However in our case we have already computed the covariance matrix above in (\ref{covform}).

\begin{thm}\label{cov} In the limit of large $K$ the joint distribution of the $X_t$
tends to a (multidimensional) Gaussian with mean $\E[\vec X] = K\vec w$
and covariance matrix $\Cov(X_s,X_t) = Kw_s(I - C\K)_{t,s}.$
\end{thm}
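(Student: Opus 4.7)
The plan is to realize the multinomial tiling measure as an independent Poisson product measure conditioned on a linear constraint, and then to combine the multivariate central limit theorem with the standard formula for the conditional law of a Gaussian on a linear subspace.

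First, I would pass to the critical gauge of Theorem \ref{divgauge}, so that $P(\x)=1$ and the gauged weights $w'_t := w_t x_t$ form a probability distribution on $T$ (Corollary \ref{probs}); by the gauge lemma we may assume throughout that the weights $w_t$ are already in this form. I would then introduce independent Poisson random variables $\tilde X_t \sim \mathrm{Poisson}(K w_t)$. The combinatorial bookkeeping in the proof of Theorem \ref{expgf} shows that the multinomial tiling measure $\mu(\N)$ is exactly the conditional law of $(\tilde X_t)_{t\in T}$ given the vector constraint $D\tilde{\vec X}=\N$: the hidden factors $\prod_v x_v^{t_v K_t}$ multiply on the constraint surface to the constant $\prod_v x_v^{N_v}$, and the remaining factors $\prod_t w_t^{K_t}/K_t!$ reproduce precisely the tile-multiplicity probabilities listed there.

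Second, I would apply the multivariate central limit theorem to the unconditioned product of independent Poissons: the rescaled vector $\vec Y^{(K)} := (\tilde{\vec X}-K\vec w)/\sqrt K$ converges in distribution to a mean-zero Gaussian $\vec Y \sim N(0,C)$, where $C=\mathrm{diag}(w_t)$. Since $D\vec w=\vec\alpha$ by the critical-gauge equations (\ref{simplecriteqns}) and $\N/K\to\vec\alpha$, the exact integer constraint $D\tilde{\vec X}=\N$ translates in the rescaled variables to $D\vec Y^{(K)}\to 0$. A local central limit theorem for the independent Poissons, applied on the coset of $\Z^T$ on which $D\tilde{\vec X}$ actually varies, shows that the conditioned law of $\vec Y^{(K)}$ converges to the conditional law of $\vec Y$ given $D\vec Y=0$.

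Third, the conditional law of $\vec Y\sim N(0,C)$ given $D\vec Y=0$ is Gaussian with covariance
\[
C - CD^*(DCD^*)^{-1}DC = C - C\K C,
\]
where $DCD^*=\Delta$ by the definition of the tiling Laplacian, so that $D^*(DCD^*)^{-1}D=D^*\Delta^{-1}D=\K$; the inverse is taken on $\Im(D)$, which is where $D\vec Y$ takes its values. Undoing the $\sqrt K$ rescaling yields $\Cov(X_s,X_t)=Kw_s(I-C\K)_{t,s}$, matching the formula (\ref{covform}) already derived in Section \ref{covsection}. Since a multivariate Gaussian is determined by its mean and covariance, this completes the proof. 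The main obstacle is the local-CLT step: the integer vector $D\tilde{\vec X}$ lives on a coset of a sublattice of $\Z^V$ determined by the coloring/homology conditions in $H_1(T,\Z)$, and one must run the local CLT on the correct lattice in order to ensure that conditioning on the exact discrete event $\{D\tilde{\vec X}=\N\}$ reproduces, after rescaling, the Gaussian density on the affine subspace $\{D\vec Y=0\}$ rather than producing a degenerate limit. This is a standard if somewhat technical consequence of the multivariate local CLT for the Poisson product measure; the remaining pieces of the argument are either the ordinary multivariate CLT or the routine Gaussian-conditioning formula.
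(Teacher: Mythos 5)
Your proposal is correct, and its overall architecture --- an independent product measure conditioned on the linear constraint $D\tilde{\vec X}=\N$, followed by a central limit theorem --- is the same one the paper sketches. The substantive difference is in how the covariance is obtained. The paper, after remarking that the conditional covariance ``can in general be obtained from the original covariance matrix and the constraint matrix,'' does not carry out that computation; it instead quotes formula (\ref{covform}), which was derived combinatorially in Section \ref{covsection} by perturbing the multiplicities (comparing $\E^*$ with $\E$) and inverting the tiling Laplacian. You carry out the Gaussian-conditioning computation directly: conditioning $N(0,C)$ on $D\vec Y=0$ gives covariance $C-CD^*(DCD^*)^{-1}DC=C-C\K C$, whose $(t,s)$ entry $w_t\delta_{t,s}-w_t w_s\K_{t,s}$ agrees, by symmetry of $\K$, with $Kw_s(I-C\K)_{t,s}$ after undoing the $\sqrt K$ rescaling. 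This is a clean, self-contained derivation that doubles as an independent check of (\ref{covform}); what it does not provide is the interpretation of $\K=D^*\Delta^{-1}D$ as the response $\E^*[X_{t'}]-\E[X_{t'}]$ to deleting a tile, which Section \ref{covsection} supplies and which is what gets used in the crystallization computations. You are also more explicit than the paper about the genuinely delicate step, namely the local CLT on the correct coset of $D(\Z^T)$ inside $\Z^V$ (the $H_1(T,\Z)$ coloring constraints), which the paper elides entirely; this is indeed where the real work would be in a fully rigorous write-up. Two small points to tidy: (a) your claim $D\vec Y^{(K)}\to 0$ requires $\N-K\vec\alpha=o(\sqrt K)$, which does not follow from $N_v/K\to\alpha_v$ alone --- the clean fix is to take $\vec\alpha=\N/K$ exactly and let the critical gauge depend on $K$, so that the conditioning is exactly at the mean; (b) one should say explicitly that $D\vec Y$ is supported on $\Im(D)$, where $\Delta$ is invertible, so that $\Delta^{-1}$ in the conditioning formula is well defined.
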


Examples are explored in Section \ref{crystalsection}.

\subsection{Coulomb gas}\label{coulomb}

In this section we assume for simplicity that $\G$ is regular, $T$ and $w$ are symmetric under a transitive group of 
automorphisms of $\G$, and $N_v\equiv N$. We also assume $t_v\in\{0,1\}$ for all tiles and vertices. 
We have $K=nN/\delta$. 
Under these conditions the critical weights are $x_v\equiv x$ where $x$ is a constant.  

Suppose that we take a small perturbation of the multiplicities
$N_v = N+q_v$ where $\frac{q_v}{N}=o(1)$. We call $q_v$ the \emph{$d$-charge} at $v$. It can be positive or negative;
we'll take the sum of $d$-charges to be zero. Let us compute $\sigma$ as a function of the $q_v$.

We consider perturbations of $\sigma$ with respect to the multiplicities $\alpha_v$.
Each $\alpha_v$ is of the form $\alpha_v = \delta/n + d\alpha_v$ where $d\alpha_v = \frac{\delta q_v}{nN}$.

To first order from (\ref{sigma}) we have
$$d\sigma(\vec\alpha) = \sum_v\frac{P_{x_v}}Pdx_v - \sum_v\alpha_v\frac{dx_v}{x_v} - \sum_v\log x_v d\alpha_v = - \sum_v\log x_v d\alpha_v = 0$$
since $\log x_v$ is constant.
To second order we have
$$\sigma(\vec\alpha+d\vec\alpha)-\sigma(\vec\alpha) = $$
$$\sum_v(\frac{P_{x_vx_v}}{P}-\frac{P_{x_v}^2}{P^2})\frac{dx_v^2}2   + \sum_{u\ne v}(\frac{P_{x_ux_v}}{P}-\frac{P_{x_u}P_{x_v}}{P^2})dx_udx_v - \sum_v\frac{d\alpha_vdx_v}{x_v}+ 
\sum_v\frac{\alpha_v}{x_v^2}\frac{dx_v^2}{2}.$$
Substituting $x_v=x, P_{x_v}= \alpha_v/x, P=1, P_{x_vx_v}=0$ and $\alpha_v=\delta/n$ gives
$$=\sum_v(\frac{\delta}{n}-\frac{\delta^2}{n^2})\frac{dx_v^2}{2x^2}  + 
\sum_{u\ne v}(\sum_tw_tt_ut_v-\frac{\delta^2}{n^2})\frac{dx_u}x\frac{dx_v}x - 
\sum_v d\alpha_v\frac{dx_v}{x}$$
which we can write as
$$=\frac{d\vec x^t}x\left(\frac12\Delta-\frac{\delta^2}{2n^2}J\right)\frac{d\vec x}x-d\vec \alpha^t\cdot\frac{d\vec x}x$$
where $J$ is the all-$1$'s matrix. But  
$$\frac{d\vec x^t}xJ\frac{d\vec x}x=d\vec \alpha\Delta^{-1}J\Delta^{-1}d\vec \alpha=0$$
(since $\Delta^{-1}$ has constant row and column sums, $\Delta^{-1}J\Delta^{-1}$ is a multiple of $J$).
We are left with 
\begin{thm} Under the above conditions on $\G,\N,T,w$ the second derivative of $\sigma$ in direction 
$\vec\alpha$ (of sum zero) is
$$d^2\sigma=-\frac12d\vec \alpha\Delta^{-1}d\vec \alpha.$$
\end{thm}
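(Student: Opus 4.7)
The plan is to Taylor-expand $\sigma(\vec\alpha+d\vec\alpha)$ around the symmetric critical point and simplify using the special structure imposed by transitivity, together with the relation $\frac{d\vec x}{x}=\Delta^{-1}d\vec\alpha$ derived in Section \ref{changemults}.

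First I would write $\sigma=\log P(\X)-\sum_v\alpha_v\log x_v$ as in (\ref{sigma}), treating the $x_v$ as functions of $\vec\alpha$ determined by the criticality equations. The first-order term is $d\sigma=\sum_v\bigl(\frac{P_{x_v}}{P}-\frac{\alpha_v}{x_v}\bigr)dx_v-\sum_v(\log x_v)\,d\alpha_v$; the parenthesis vanishes by (\ref{criticalwts}), and under the symmetry hypotheses $\log x_v$ is constant so its contribution is $(\log x)\sum_v d\alpha_v=0$. Thus only the second-order terms survive, and they come from (i) the Hessian of $\log P$ in the $x$-variables, (ii) the $-\sum_v d\alpha_v\,dx_v/x_v$ cross term, and (iii) the expansion of $-\alpha_v\log x_v$ to second order in $dx_v$. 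I would carefully collect these three contributions exactly as displayed in the paragraph preceding the theorem.

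Next I would substitute the symmetric critical-point data: $x_v\equiv x$, $P=1$, $P_{x_v}=\alpha_v/x=\delta/(nx)$, and crucially $P_{x_vx_v}=0$ (which uses the hypothesis $t_v\in\{0,1\}$, so no tile contributes a $x_v^2$ term to $P$). A routine grouping of the remaining terms rewrites the second-order expansion in the form
\begin{equation*}
d^2\sigma=\tfrac{d\vec x^{\,t}}{x}\Bigl(\tfrac12\Delta-\tfrac{\delta^2}{2n^2}J\Bigr)\tfrac{d\vec x}{x}-d\vec\alpha^{\,t}\cdot\tfrac{d\vec x}{x},
\end{equation*}
where $J$ is the all-ones matrix and I have used $\sum_t w_tt_ut_v=\Delta_{uv}$ together with $\Delta_{vv}=\delta/n\cdot(\text{const})$ coming from the $t_v\in\{0,1\}$ assumption to combine the diagonal and off-diagonal pieces of the Hessian into a single $\frac12\Delta$.

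To finish I would substitute $\frac{d\vec x}{x}=\Delta^{-1}d\vec\alpha$ (valid on $\mathrm{Im}(D)$, which contains $d\vec\alpha$ since $\sum d\alpha_v=0$ and by transitivity $\mathrm{Im}(D)$ is the sum-zero subspace up to $H_1$-gauge), which turns the expression into $\tfrac12 d\vec\alpha^{\,t}\Delta^{-1}d\vec\alpha-\tfrac{\delta^2}{2n^2}d\vec\alpha^{\,t}\Delta^{-1}J\Delta^{-1}d\vec\alpha-d\vec\alpha^{\,t}\Delta^{-1}d\vec\alpha$. The middle term vanishes because transitivity forces $\Delta^{-1}$ to have constant row and column sums on the sum-zero subspace, so $\Delta^{-1}J\Delta^{-1}$ is a scalar multiple of $J$, and $J d\vec\alpha=0$. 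The remaining two terms combine to the claimed $-\tfrac12 d\vec\alpha^{\,t}\Delta^{-1}d\vec\alpha$. The main obstacle I anticipate is bookkeeping rather than conceptual: verifying that the symmetric substitutions convert the Hessian of $\log P$ cleanly into $\Delta/2$ (using the $t_v\in\{0,1\}$ hypothesis to kill second derivatives in a single variable) and checking that the gauge freedom in $d\vec x$ does not affect the final quadratic form, since $\ker\Delta=H_1(T,\R)$ is exactly the direction in which $\sigma$ is constant.
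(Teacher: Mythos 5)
Your proposal is correct and follows essentially the same route as the paper: first-order term vanishing by symmetry, the three second-order contributions grouped into $\frac{d\vec x^{\,t}}{x}\bigl(\tfrac12\Delta-\tfrac{\delta^2}{2n^2}J\bigr)\frac{d\vec x}{x}-d\vec\alpha^{\,t}\cdot\frac{d\vec x}{x}$, substitution of $\frac{d\vec x}{x}=\Delta^{-1}d\vec\alpha$, and the $J$-term killed by the constant row/column sums of $\Delta^{-1}$ together with $Jd\vec\alpha=0$. Your explicit use of $t_v\in\{0,1\}$ to kill $P_{x_vx_v}$ and to identify the diagonal of $\Delta$ with $\alpha_v$ is exactly the substitution the paper performs implicitly.
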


If we fix the $d$-charges but allow them to move from vertex to vertex, considering $\sigma$ as
a function of position of these charges,
we see that $d$-charges interact with a potential defined by $\Delta^{-1}$. 
This is the \emph{Coulomb potential} associated with $\Delta$. 

\subsubsection{Example: dimer case}
Consider for example the case in which $\G$ is bipartite, and we have the multinomial dimer model. 
Define the \emph{charge} $\tilde q_v$ of a vertex of d-charge $q_v$ to be
$\tilde q_v=d(v)q_v$ where $d(v) = 1$ for a white vertex and $d(v)=-1$ for a black vertex.
Then note that $\tilde\Delta_{u,v} = d(u)\Delta_{u,v} d(v)$ is the \emph{standard} graph laplacian. 

Suppose we fix the charges $\tilde q_v$, but not their locations.
We can interpret $\sigma$ as a function of position as a potential energy which
causes like charges to repel and opposite charges to attract.
That is, $\sigma$ is larger when like charges are farther apart and opposite charges are closer.
The \emph{force} of repulsion/attraction is naturally given by the gradient of the Green's function 
$\tilde\Delta^{-1}$ (that is, the gradient of the potential). 
For $\G = \Z^2$, for example this is a ``$1/r$'' force for distant particles, where $r$ is the vector between them.
For $\G = \Z^3$ it is a ``$1/r^2$" force for distant particles. 
These conclusions are consistent with standard electrostatics in $\R^2$ and $\R^3$, when the charges are
far apart (compared to the lattice spacing). 
In $2d$ these results also agree with corresponding results obtained for the single dimer model
by Ciucu \cite{Ciucu}.

\section{Crystallization}\label{crystalsection}

In this section we restrict our graphs $\G$ to be subgraphs of $\Z^d$, and we consider tilings with
tiles $T$ which are translates of one or more ``prototiles''.
In the simplest case we have only two prototiles $\t,\t_0$, where $\t_0$ is a \emph{single vertex}. Then a 
$\N$-fold tiling of $\G$ with $T$ is a tiling of $\G_\N$ with lifts of translates of $\t$ which has a number of \emph{holes}  (which are
locations which are covered by lifts of translates of $\t_0$). 
We are interested in what happens when the fractional density of holes goes to zero. 
Depending on the shape of $\t$, the system will \emph{crystallize} (Theorem \ref{crystalthm} below).

Before giving a general result, we work out some explicit examples, which are of interest in their own right,
and which illustrate the general situation. First in dimension one we consider the case of triominos
with no holes (Section \ref{3bars}) and
then with a positive fraction of holes (Section \ref{3barsdefects}), and then the $L$ triomino in $\Z^2$ (Section \ref{L}).

\subsection{Examples in one dimension}
\subsubsection{Example: bars of length $3$}\label{3bars}

Consider tilings of the cycle $\G=\Z/n\Z$ with translates of the triomino $\t=\{0,1,2\}$,
that is $T=\{t_0,\dots,t_{n-1}\}$ with $t_i=\{i,i+1,i+2\}$ with cyclic indices.
We choose constant multiplicities $N_v\equiv N$. Then the total number of tiles is $K=Nn/3$ and 
the fraction of tiles per vertex is $\alpha=3/n$.
Since the graph is regular, for the critical gauge we have $x_v\equiv x,$ and $P=nx^3=1$, so 
the weight per tile is $w_t=x^3=1/n$.
The tile laplacian $\Delta$ satisfies, for a function $f\in\R^V$,
$$(\Delta f)(j) = \frac1n(3f(j)+2f(j-1)+2f(j+1)+f(j+2)+f(j-2))$$ 
with cyclic indices. 

It is convenient to use the Fourier transform to invert $\Delta$. 
For $z$ an $n$th root of $1$ let $U_z$ be the subspace of $\C^V$ consisting of $z$-periodic functions
$$U_z = \{f\in\C^{V}~|~f(x+1)=zf(x)\}.$$
Here $U_z$ is the eigenspace for translation by $1$ on $\G$ with eigenvalue $z$. 
The operator $\Delta$ preserves each $U_z$ and its action on $U_z$ is multiplication by 
$$\lambda_z = \frac1n(1+z+z^2)(1+z^{-1}+z^{-2}).$$

The matrix $D$ can likewise be written in a Fourier basis, if we identify a tile $t_i$ with the location of its left endpoint $i$. 
The action of $D$ on $U_z$
is then multiplication by $1+z+z^2$, and that of $D^*$ is multiplication by $1+1/z+1/z^2$. 
We see that $\K=D^*\Delta^{-1}D$ is simply multiplication by $n$ on subspaces $U_z$ for which $z^2+z+1\ne0$,
and $0$ on subspaces $U_z$ for which $z^2+z+1=0.$

Suppose that $n$ is not a multiple of $3$. Then $\K$ is a scalar, equal to multiplication by $n$.
The covariance matrix is identically zero: $\Cov(X_t,X_{t'}) = 0.$
This is not surprising since
the tile multiplicities $X_i$ are not random: we necessarily have $X_i=N/3$ for all $i$.

Suppose that $n$ is a multiple of $3$. Then $\frac1n\K$ is a projection matrix $\P_3$
onto the span of the subspaces $U_z$
where $z^2+z+1\ne 0$. The covariance matrix is $\Cov(X_t,X_t') = \frac{K}{n}(I-\P_3)$, 
where $I-\P_3$ is the projection onto the (two-dimensional) span of the $U_z$ where $z^2+z+1=0$. 
That is, in the standard basis on $\R^V$,
\be\label{K0}\Cov = \frac{K}{n^2}\begin{pmatrix}2&-1&-1&2\\-1&2&-1&-1\\
-1&-1&2&-1\\
2&-1&-1&2\\
&&&&\ddots\end{pmatrix}.\ee
Pairs of tiles $t,t'$ at distance a multiple of $3$ are perfectly correlated: we have a crystal.
This is also not surprising since the multiplicity at $j$ determines $X_j$ as a function of $X_{j-1},X_{j-2}$:
we have $X_{j-2}+X_{j-1}+X_j = N$, which implies $X_j=X_{j+3}$ for all $j$.

\subsubsection{Example: bars of length $3$ and singletons}\label{3barsdefects}

Let us consider a variant of the above model, where we also allow singleton tiles, with weight $1$. As in Section \ref{number} we include a dummy vertex $v_0$ in $\G$;
we can use the multiplicity $N_0$ of the dummy vertex to control the number of singleton tiles.
We have $N_0+nN=3K$, and
$\alpha_0=N_0/K$ and $\alpha=N/K$ satisfy $\alpha_0+n\alpha=3$.
Also 
$$P_0=\frac{x_0^2}2\sum x_i + \sum x_ix_{i+1}x_{i+2} = \frac{n}2 x_0^2x + nx^3$$
where we replaced $x_i$ with $x$ by circular symmetry.
The critical weights $w_0,w$ are $w_0=\frac{\alpha_0}{2n}$ and $w=\frac{2-\alpha_0}{2n}.$

In this case the laplacian can be partially diagonalized.
We write 
$$\R^\G = \left(\bigoplus_{z^n=1} U_z\right) \oplus U_0$$
wjere $U_z$ is as above and $U_0\cong\R$ is the space of functions on $v_0$.
The laplacian $\Delta$ preserves each $U_z$ for $z\ne 1,0$, and acts by multiplication
by $$\lambda_z = w_0+w(1+z+z^2)(1+z^{-1}+z^{-2})$$ on these $U_z$.
The laplacian does not preserve $U_1$ but does preserve the sum $U_1\oplus U_0$. 
On the space $U_1\oplus U_0$, with basis given by 
$e_1=(1,\dots,1,0)$ and $e_0=(0,\dots,0,1)$, the laplacian acts as the matrix 
\be\label{V10mtx}\begin{pmatrix}w_0+9w&2w_0\\2nw_0&4nw_0
\end{pmatrix}.
\ee
More generally, for a tile of size $A$, the matrix would be 
$$\begin{pmatrix}w_0+A^2w&(A-1)w_0\\(A-1)nw_0&(A-1)^2nw_0
\end{pmatrix};$$
we'll use this below.
Likewise the kernel 
$\K = D^*\Delta^{-1}D$ has a similar decomposition into the subspaces $U_z$ and
$U_1\oplus U_0$. On $U_z$ it acts as multiplication by 
$$\frac{(1+z+z^2)(1+z^{-1}+z^{-2})}{w_0+w(1+z+z^2)(1+z^{-1}+z^{-2})}.$$
On $U_1\oplus U_0$, if we take a single triomino $X_t$ it corresponds to the vector 
$\frac{3}{\sqrt{n}}e_1+ 0e_0$. Thus the contribution to the covariance is 
$\frac1{wn},$ which is $\frac{9}n$ times the $1,1$ entry in the inverse of (\ref{V10mtx}).

Now using Theorem \ref{cov}, 
\be\label{Cov30}\Cov(X_s,X_t) = Kw\delta_{s=t} -\frac{Kw}n - \frac{Kw^2}{n}\sum_{\stackrel{z^n=1}{z\ne 1}}\frac{z^{s-t}(1+z+z^2)(1+z^{-1}+z^{-2})}{w_0+w(1+z+z^2)(1+z^{-1}+z^{-2})}.\ee
Here the second term $-\frac{Kw}n$ is the component of 
$-Kw^2(D^*\Delta^{-1}D)_{s,t}$
on the subspace $U_1$, that is $-Kw^2(\frac1{nw})$.

The covariance (\ref{Cov30}) only depends on $s-t$; without loss of generality assume $t=0$. 
Now (\ref{Cov30}) simplifies to
\be\label{Covsum}
\Cov(X_0,X_s) = \frac{Kw_0}{n}\sum_{\stackrel{z^n=1}{z\ne 1}}\frac{z^s}{w_0/w+(1+z+z^2)(1+z^{-1}+z^{-2})}.\ee

As long as $w_0/w\ll1$, to leading order this sum is localized on the region where $z$ is close to one of the primitive 
cube roots of $1$.
If $\frac{1}{n^2}\ll \frac{w_0}w \ll 1$, we can approximate the sum with an integral.
Writing $z = e^{i(2\pi/3+2\pi j/n)},$ and $\eps=w_0/w$, and letting $u=2\pi j/n$, the contribution near $e^{2\pi i/3}$ is
(up to a $1+o(1)$ factor)
$$\frac{Kw_0\exp(\frac{2\pi is}3)}{2\pi}\int_{-\infty}^{\infty}\frac{e^{i su}du}{\eps+3u^2}=\frac{Kw_0\exp(\frac{2\pi is}3)}{2\sqrt{3\eps}}e^{-|s|\sqrt{\eps/3}}.$$
The integral near the other primitive cube root of $1$ contributes the complex conjugate, and we get
\be\label{covtris}\Cov(X_0,X_s)\sim\frac{Kw_0\cos(\frac{2\pi s}3)}{\sqrt{3\eps}}e^{-|s|\sqrt{\eps/3}}.\ee
We see the onset of the $3$-periodic correlations between the $X_s$ as $\eps\to0$. 

Note that the variances and covariances here are much larger than in the $w_0=0$ case: here they are of order
$$\frac{Kw_0}{\sqrt{\eps}} = K\sqrt{ww_0} = \frac{K\sqrt{\alpha_0(2-\alpha_0)}}{2n},$$
which is (for constant $\alpha_0$) a factor of $n$ larger than that for the pure triomino case above.

For smaller $\eps$, write $\eps=\beta/n^2$; in this case expanding near the primitive cube roots of $1$ gives
\begin{eqnarray*}
\Cov(X_0,X_s)&\sim& \frac{K\beta\cos(\frac{2\pi s}3)}{n^2}\sum_{j\in\Z}\frac{1}{\beta+12\pi^2j^2} \\
&=&
\frac{K\sqrt{\beta}\cos(\frac{2\pi s}3)}{2n^2\sqrt{3}}\coth(\frac{\sqrt{\beta}}{2\sqrt{3}}) = \frac{K\cos(\frac{2\pi s}3)}{n^2}(1+\frac{\beta}{36}+O(\beta^2)),
\end{eqnarray*}
where in the last line we used the Mittag-Leffler expansion for the hyperbolic cotangent function 
$$\pi \coth(\pi z) =\frac{1}{z} + 2 \sum_{n=1}^{\infty} \frac{z}{z^2+n^2}.$$
%%%
\old{
The second equality here comes from taking the logarithmic derivative in Euler's product formula for the sine function (see, for example, \cite{SS})
$$\sin(\pi z) = \pi z \prod_{n=1}^{\infty} \left(1 - \frac{z^2}{n^2}\right).$$}
%%%

\subsubsection{Quasiperiodic example in dimension $1$}

On $\Z/n\Z$ let $\t$ be the tile with characteristic polynomial $p(z)=2/z+1+2z$. It has two roots of modulus $1$ which are not roots of unity. We tile with $\t$ and $\t_0$ the
singleton, and as before let $\eps=w_0/w$. The covariance function is given by the analogue of (\ref{Covsum}) where the 
denominator is replaced by $w_0/w + |p(z)|^2$. For $1/n^2\ll w_0/w\ll1$ we
get
$$\Cov(X_0,X_s) \sim \frac{Kw_0}{2\pi i}\int_{S^1}\frac{z^s}{\eps+p^2}\frac{dz}{z}.$$
The integral can be localized near the two roots $e^{\pm i\theta_0}$ of $p$.
Letting $z=e^{i(\theta_0+x)}$ we get 
$$\Cov(X_0,X_s) \sim 2Kw_0\Re\left[\frac{e^{is\theta_0}}{2\pi}\int_{\R}\frac{e^{isx}dx}{\eps+2p'(\theta_0)^2x^2}\right] = \frac{Kw_0\cos(s\theta_0)e^{-|s|\sqrt{\eps a}}}{\sqrt{\eps a}}$$
where $a=2p'(\theta_0)^2 = 32\sin^2(\theta_0).$

For a similar example without multiplicity, we can take $p(z) = 1+z+z^3+z^5+z^6$.

\subsection{Examples in higher dimensions}

\subsubsection{Example: $L$ triomino}
\label{L}

For a $2d$ example, consider tilings of the torus $\Z^2/n\Z^2$ with translates of the triomino 
$\t=\{(0,0),(1,0),(0,1)\}$,
and constant multiplicities $\N\equiv N$. Let $\Gamma$ be the sublattice of $\Z^2$ generated by $(1,1)$ and $(3,0)$. Translates of $\t$ by $\Gamma$ tile
$\Z^2$. Translates of this tiling by the three cosets of $\Gamma$ in $\Z^2$ give three distinct periodic tilings.

We have $K=Nn^2/3$, $\alpha=3/n^2$.
The graph is regular, so for the critical gauge we have $x_v\equiv x,$ and $w_t=1/n^2$. 

For $z,u$ two $n$th roots of $1$, define 
$U_{z,u}$ to be the subspace of $\C^{V}\cong\C^{n^2}$ consisting of $(z,u)$-periodic functions,
that is, functions $f:\Z^2/n\Z^2\to\C$ such that $f(i+1,j) =z f(i,j)$ and $f(i,j+1)=u f(i,j)$. 

The action of $\Delta$ on $U_{z,u}$ is multiplication by
$$\lambda_{z,w} = \frac1{n^2}(1+z+u)(1+z^{-1}+u^{-1}).$$
If we index tiles according to the location of their lower left vertex, then 
the matrix $D$ corresponds to multiplication by $1+z+u$, and $D^*$ by $1+z^{-1}+u^{-1}$.
Then $\K=D^*\Delta^{-1}D$ is multiplication by $n^2$ of spaces $U_{z,u}$ for which $1+z+u\ne 0$,
and zero on spaces $U_{z,u}$ where $1+z+u=0$. 

If $n$ is not a multiple of $3$, then there are no subspaces $U_{z,u}$ where $1+z+u=0$, and so
$\K=n^2I$ is a scalar multiple of the identity, and $\Cov$ is the zero matrix:
all tile multiplicities are determined and constant.
If $n$ is a multiple of $3$, then $\Cov = \frac{K}{n^2}(I-\P)$ where $I-\P$ is the projection onto the span of 
$U_{\omega,\omega^2}\oplus U_{\omega^2,\omega}$ (here $\omega=e^{2\pi i/3}$). 
The covariance between tiles is $2K/n^4$ if they
lie in the same translate of $\Gamma$ and $-K/n^4$ if they do not. Again this is a \emph{perfect crystal}; the tiles differing
by translates in $\Gamma$ are perfectly correlated. 

Let us now allow a small fraction of singleton tiles: $N_0+n^2 N=3K$, and $\alpha_0=N_0/K$.
As before $w_0=\frac{\alpha_0}{2n^2}$ and $w=\frac{2-\alpha_0}{2n^2}$. 
The laplacian $\Delta$ still preserves each $U_{z,u}$ for $(z,u)\ne(1,1)$, and on $U_{z,u}$ acts by multiplication by
$$\lambda_{z,w} = w_0+w(1+z+u)(1+z^{-1}+u^{-1}).$$
On $U_{1,1}\oplus U_0$ it acts as the matrix
$$\begin{pmatrix}w_0+9w&2w_0\\2n^2w_0&4n^2w_0
\end{pmatrix}.$$
We have
\begin{align*}
\Cov(X_{(0,0)},X_{(s,t)})
&= Kw\delta_{s=t=0} - \frac{Kw}{n^2}-\frac{Kw^2}{n^2}\sum_{\stackrel{z^n=1=u^n}{(z,u)\ne(1,1)}}\frac{z^su^t(1+z+u)(1+z^{-1}+u^{-1})}{w_0+w(1+z+u)(1+z^{-1}+u^{-1})}\\
&= \frac{Kw_0}{n^2}\sum_{\stackrel{z^n=1=u^n}{(z,u)\ne(1,1)}}\frac{z^su^t}{w_0/w+(1+z+u)(1+z^{-1}+u^{-1})}.\end{align*}

Now assume $w_0/w\ll 1$ and is fixed as $n\to\infty$. The sum is then localized to the region near $(z,u) = (\omega,\omega^2)$ or $(\omega^2,\omega).$ 
Near the first point write $z=e^{i(2\pi/3+x)}$ and $u=e^{i(4\pi/3+y)}$, and take $\eps=w_0/w$.
The contribution near this point is 
$$\sim \frac{Kw_0\exp(\frac{2\pi i(s+2t)}3)}{(2\pi)^2}\iint_{\R^2}\frac{e^{i(sx+ty)}dx\,dy}{\eps+x^2-xy+y^2}.$$
The integral here for $(s,t)\ne(0,0)$ is a Bessel-K function (see the appendix Section \ref{Besselscn}), 
$$=\frac{Kw_0\exp(\frac{2\pi i(s+2t)}3)}{\pi\sqrt{3}}B(\frac{2}{\sqrt{3}}\sqrt{\eps(s^2+st+t^2)}).$$
Summing over both roots, and taking $\eps$ small 
we have 
$$\Cov(X_{(0,0)},X_{(s,t)})= \frac{Kw_0\cos(\frac{2\pi}{3}(s+2t))}{\pi\sqrt{3}}\left(\log\frac{1}{\eps}-\log\frac{2(s^2+st+t^2)}{3}-2\gamma_E+O(\eps)\right)$$
where $\gamma_E$ is the Euler gamma.  
 
For the variance, that is, when $(s,t)=(0,0)$,  
$$\Var(X_{0,0}) = \frac{Kw_0}{(2\pi i)^2}\iint\frac{dz du}{(1+u)(z-r_1)(z-r_2)}$$
where $r_1,r_2$ are the roots of the denominator which is a quadratic polynomial in $z$. 
Let $r_1$ be the root inside the unit circle; then using residues this is  
$$=\frac{Kw_0}{2\pi i}\int_{S^1}\frac{du}{(1+u)(r_1-r_2)}$$
$$=\frac{Kw_0}{2\pi}\int_0^{2\pi}\frac{d\theta}{\sqrt{3+6\eps+\eps^2+(4+4\eps)\cos\theta+2\cos(2\theta)}}$$
and splitting the integral into parts near $\theta=2\pi/3,4\pi/3$ and the remainder, we arrive at
$$=\frac{Kw_0}{\pi\sqrt{3}}(\log\frac{9}{\eps})(1+o(1)).$$

See Figure \ref{Lcov} for a plot of the covariances for small $\eps$. 
\begin{figure}
\begin{center}\includegraphics[width=2.5in]{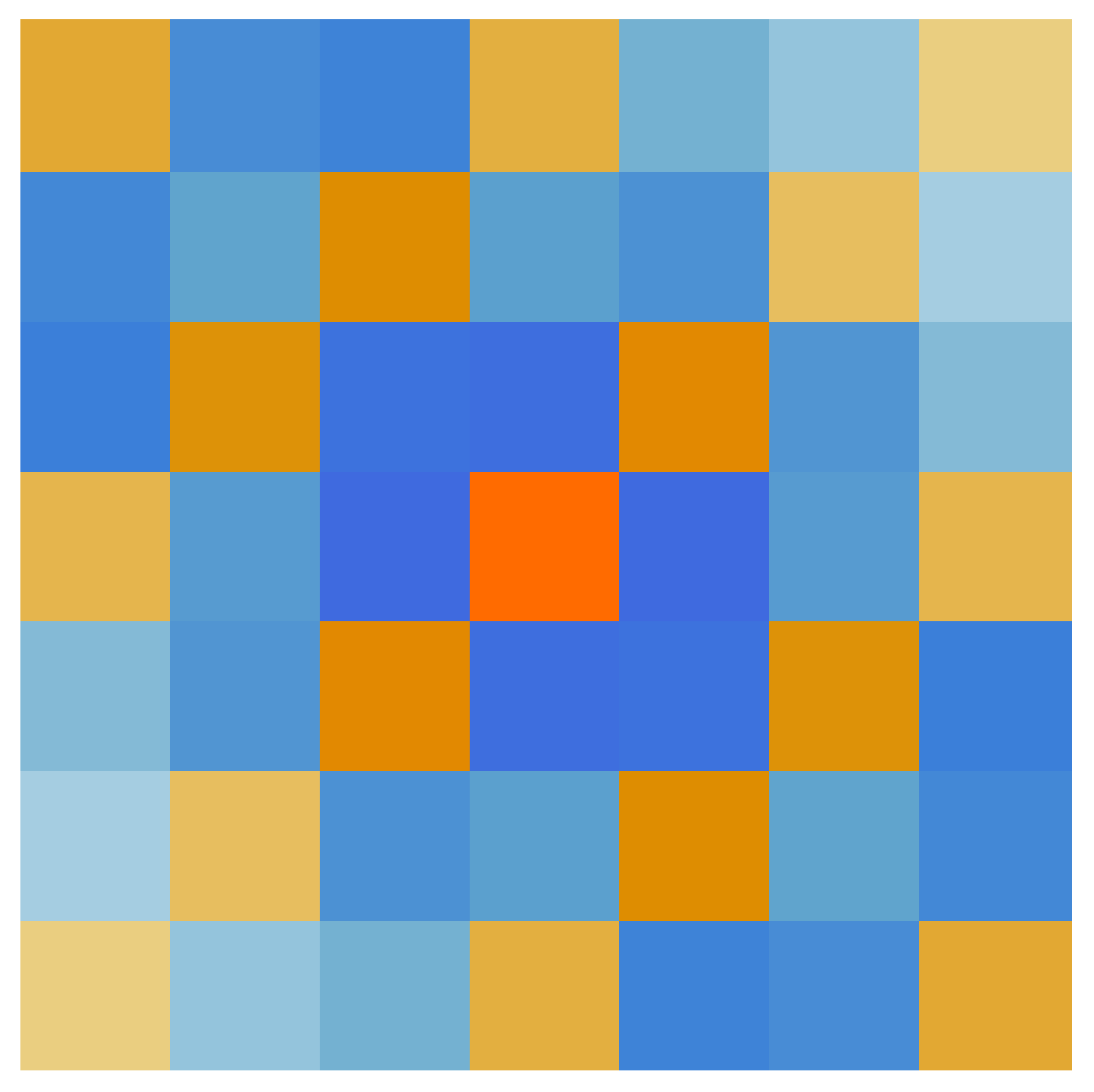}\hskip1cm\includegraphics[width=2.5in]{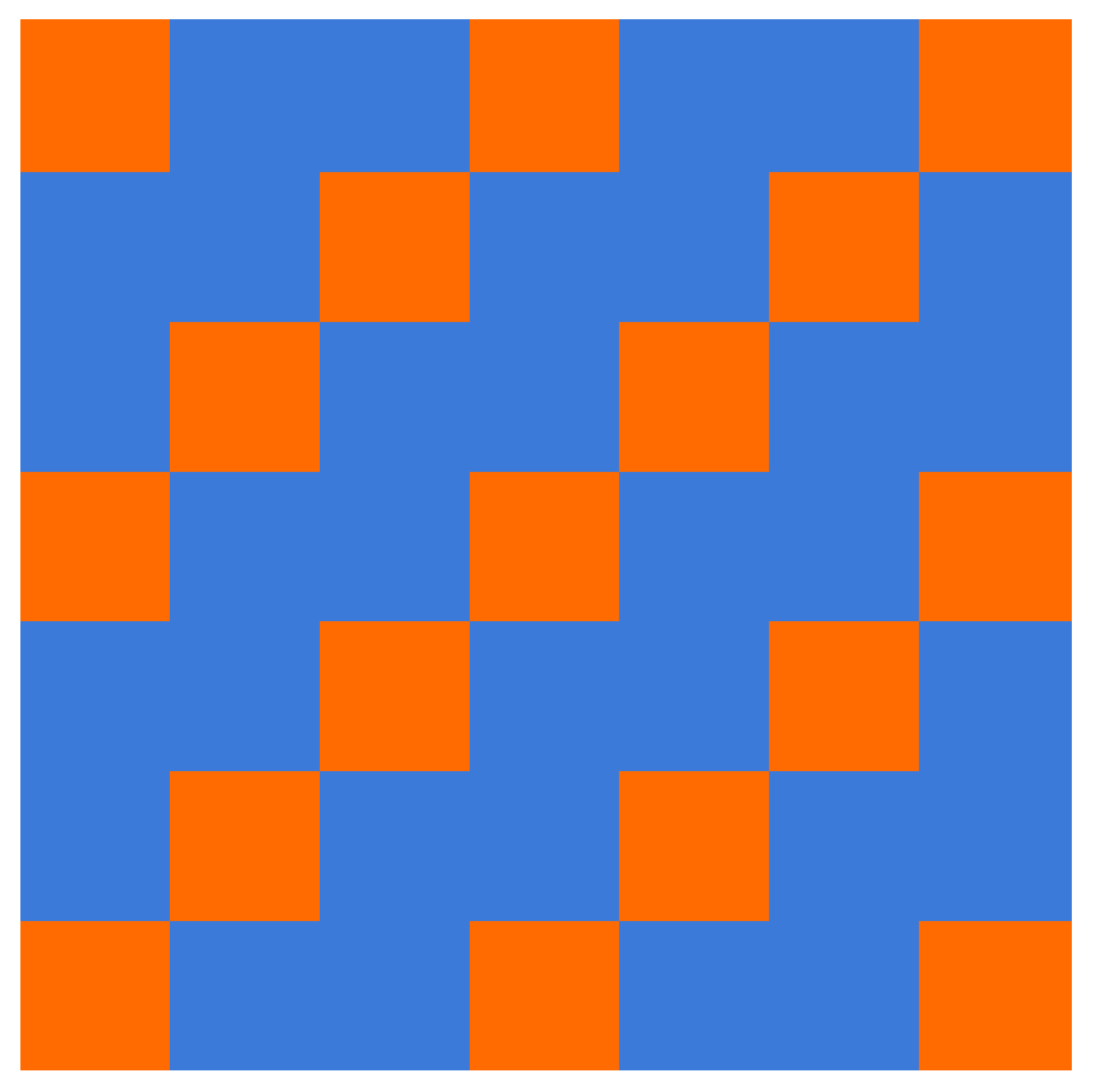}\end{center}
\caption{\small{\label{Lcov}Covariances for the L polyomino for $\eps=0.001$ (left) and in the limit $\eps=0$ (right). Here values are scaled to the range from
$1$ (orange) to $-1$ (dark blue) with $0$ being white.}}
\end{figure}

\subsubsection{Examples in $\Z^2$ with simple zeros.} 

The above example of the $L$ triomino can be generalized.
Consider the case of tiling with a polyomino $\t$ in $\Z^2$ and a small density of singletons $\t_0$.
The characteristic polynomial $p(z,u)$ of a polyomino $\t$ is defined as 
$$p(z,u) = \sum_{(i,j)\in\t} z^iu^j.$$

As in the previous section we have
\begin{align}\nonumber\Cov(X_{0,0},X_{s,t}) &= \frac{Kw_0}{n^2}
\sum_{\stackrel{z^n=1=w^n}{(z,u)\ne(1,1)}}
\frac{z^su^t}{w_0/w+p(z,u)p(1/z,1/u)}\\
\label{Covintegral}&\sim\frac{Kw_0}{(2\pi)^2}\iint_{(S^1)^2} \frac{z^su^t}{\eps+p(z,u)p(1/z,1/u)}\frac{dz}{iz}\frac{du}{iu}.
\end{align}

Suppose now that $p(z,u)$ has a finite number of roots $\{(z_i,u_i)\}_{i=1,\ldots,k}$ on $\T^2=S^1\times S^1$.
A root $(z,u)$ is \emph{simple} if $\frac{zP_z}{uP_u}\not\in\R$; this means the zero set of $p$ intersects
the unit torus transversely at that point. 
We suppose for the moment that all roots $(z_i,u_i)$ are simple.

For $w_0/w$ a small constant the integral (\ref{Covintegral}) can be localized near each root.
Near a simple root $(z_0,u_0)$ the integral is approximated by
$$\frac{Kw_0z_0^su_0^t}{(2\pi)^2}\int_{\R^2} \frac{e^{i(sx+ty)}\,dx\,dy}{\eps+ax^2+bxy+cy^2}$$
where the quadratic form in the denominator is 
$$ax^2+bxy+cy^2 = |z_0P_z(z_0,u_0)x + u_0P_u(z_0,u_0)y|^2.$$
Since the root is simple this quadratic form is positive definite,
and the integral is defined and finite (for $(s,t)\ne(0,0)$). 
This integral is a (linear image of a) Bessel-K function (see section \ref{Besselscn}).
Summing over all roots, the covariance is a superposition
of these Bessel-type functions. 

We also need the $s=t=0$ case, that is, the variance, which we cannot get using Bessel functions.
We have 
$$\Var(X_{0,0}) = \frac{Kw_0}{(2\pi)^2}\iint \frac{1}{\eps+p(z,u)p(1/z,1/u)}\frac{dz}{iz}\frac{du}{iu}.$$
Localize to a ball $B$ of radius $M\sqrt{\eps}$ around a simple root $(z_0,u_0)$i (where $M$ is a large constant). Let $z=z_0e^{is\sqrt{\eps}}$ and $u=u_0e^{it\sqrt{\eps}}$.
The contribution from this ball is
$$\frac{Kw_0}{(2\pi)^2}\int_{B}\frac{ds\,dt}{1+as^2+bst+ct^2+O(\eps^{1/2})} = \frac{Kw_0}{2\pi}\left(\frac{1}{\sqrt{4ac-b^2}}\log\frac{1}{\eps}+O(1)\right).$$
This is then summed over all roots.

\begin{thm}\label{crystalthm} Suppose $\t$ is a polyomino whose characteristic polynomial has only simple roots 
$\{(z_j,u_j)\}_{j=1,\ldots,k}$ on $\T^2$,
and consider tilings of $\Z^2$ with translates of $\t$ and the singleton $\t_0$. Fix a small $\eps=w_0/w$,
then the covariance function is, up to a $1+o_{\eps}(1)$ factor,
$$\Cov(X_{0,0},X_{s,t})= Kw_0\sum_{j=1}^{k} \frac{z_j^su_j^t}{(2\pi)^2}\int_{\R^2}\frac{e^{i(sx+ty)}\,dx\,dy}{
\eps+|z_jP_z(z_j,u_j)x+u_jP_u(z_j,u_j)y|^2}.$$
\end{thm}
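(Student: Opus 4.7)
The starting point is the integral representation (\ref{Covintegral}) for the covariance, which itself follows from Theorem \ref{cov} and the Fourier diagonalization of the tiling laplacian (the general version of the computation carried out explicitly for the $L$-triomino in the preceding subsection). On the unit torus, $p(1/z,1/u)=\overline{p(z,u)}$ for $p$ with real coefficients, so the denominator reads $\eps+|p(z,u)|^2$. Since $p$ has only finitely many roots on $\T^2$, the integrand is uniformly bounded away from any fixed neighborhood of these roots, so the $\eps\to 0$ asymptotics are controlled entirely by the behavior near each root. The plan is therefore a standard localization argument: sum small contributions from each root, each of which after a rescaling yields exactly the claimed Bessel-type integral.

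First, I would fix a small $\delta>0$ and partition the torus into $\delta$-neighborhoods $B_j$ of each root $(z_j,u_j)$ plus a remainder on which $|p(z,u)|^2\ge c(\delta)>0$. On the remainder the integrand is $O(1)$ uniformly in $\eps$, so the entire contribution is $O(1)$, dominated by the (diverging) contribution from the neighborhoods.

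Next, near $(z_j,u_j)$ I would parametrize $z=z_je^{ix}$, $u=u_je^{iy}$, giving $dz/(iz)=dx$, $du/(iu)=dy$, and $z^s u^t = z_j^s u_j^t\, e^{i(sx+ty)}$. Taylor expansion gives
\begin{equation*}
p(z,u)=i\bigl(z_jP_z(z_j,u_j)\,x+u_jP_u(z_j,u_j)\,y\bigr)+O(x^2+y^2),
\end{equation*}
so $|p(z,u)|^2=Q_j(x,y)+O((x^2+y^2)^{3/2})$ where $Q_j(x,y)=|z_jP_z\,x+u_jP_u\,y|^2$. The hypothesis that the root is simple, $z_jP_z/(u_jP_u)\notin\R$, is precisely what makes $Q_j$ positive definite. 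Now rescale $(x,y)=\sqrt{\eps}\,(x',y')$: the Taylor remainder becomes $O(\eps^{1/2})$ relative to the denominator $1+Q_j(x',y')$, the disk of radius $\delta$ becomes a disk of radius $\delta/\sqrt{\eps}\to\infty$, and the $\R^2$-tail of the rescaled integrand decays like $|(x',y')|^{-2}$ (or, with the oscillation from nonzero $(s,t)$, faster). Undoing the rescaling shows the contribution from $B_j$ equals, up to a multiplicative $1+o_\eps(1)$ error,
\begin{equation*}
\frac{Kw_0\,z_j^su_j^t}{(2\pi)^2}\int_{\R^2}\frac{e^{i(sx+ty)}\,dx\,dy}{\eps+Q_j(x,y)},
\end{equation*}
and summing over $j$ produces the stated formula.

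The main obstacle is combining three sources of error—the Taylor cutoff between the true $|p|^2$ and $Q_j$, the truncation of the neighborhood at radius $\delta$ rather than $\infty$, and the potentially rapid oscillation of $e^{i(sx+ty)}$ when $(s,t)$ is large—so that together they collapse into a uniform $1+o_\eps(1)$ factor. The critical structural ingredient is positive-definiteness of $Q_j$, which both guarantees convergence of the rescaled integral and controls the cubic Taylor remainder. I would handle the oscillatory case by integration by parts in the radial direction away from the origin of $B_j$, where $\eps+Q_j$ is bounded below, so the oscillation only improves decay. The $(s,t)=(0,0)$ variance is borderline—the $\R^2$ integral diverges logarithmically at infinity—and is treated separately in the paragraph preceding the theorem rather than by the formula above.
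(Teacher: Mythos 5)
Your proposal is correct and follows essentially the same route as the paper: starting from the integral representation (\ref{Covintegral}), localizing near each root on $\T^2$, Taylor-expanding $p$ to get the positive-definite quadratic form $|z_jP_zx+u_jP_uy|^2$ (positive definiteness being exactly the simplicity hypothesis), and summing the resulting Bessel-type integrals, with the $(s,t)=(0,0)$ case set aside. Your write-up is somewhat more explicit than the paper's about the error bookkeeping (the $\delta$-cutoff, the $\sqrt{\eps}$ rescaling, and the oscillatory regime), but the underlying argument is the same.
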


\subsubsection{Quasiperiodic example in dimension $2$}\label{qp2}

This is an example with simple roots which are not roots of unity. 
Consider the ``key" polyomino (Figure \ref{qp}) with $$p(z,u) = 1+z+z^2+z^3+z^4+z^5+z^6+u(1+z+z^4+z^6).$$
It has $2$ simple roots on $\T^2$, $(z_0,u_0),(\bar z_0,\bar u_0)$,
and $z_0,u_0$ have arguments $\theta,\phi$ which are not rationally related to each other, that is,
there are no integers $m_1,m_2$ such that $z_0^{m_1}u_0^{m_2}=1$ except $m_1=0=m_2$ 
(this requires a short Galois theory argument).

\begin{figure}[H]
\begin{center}\includegraphics[width=1.5in]{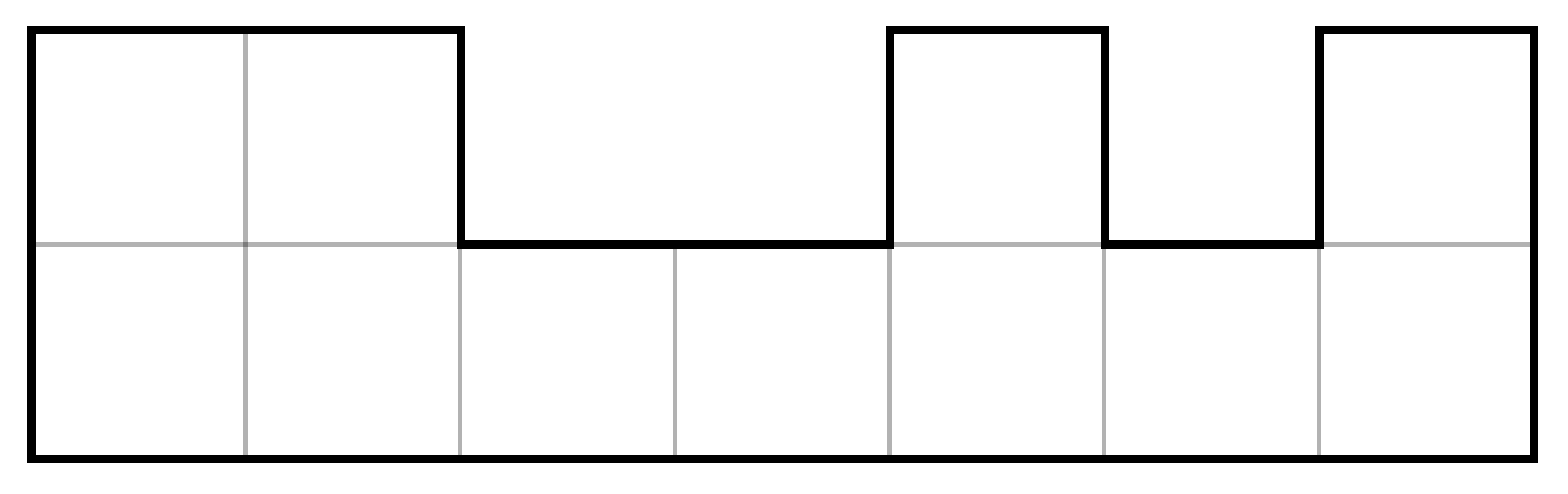}\hskip1in\includegraphics[width=2.in]{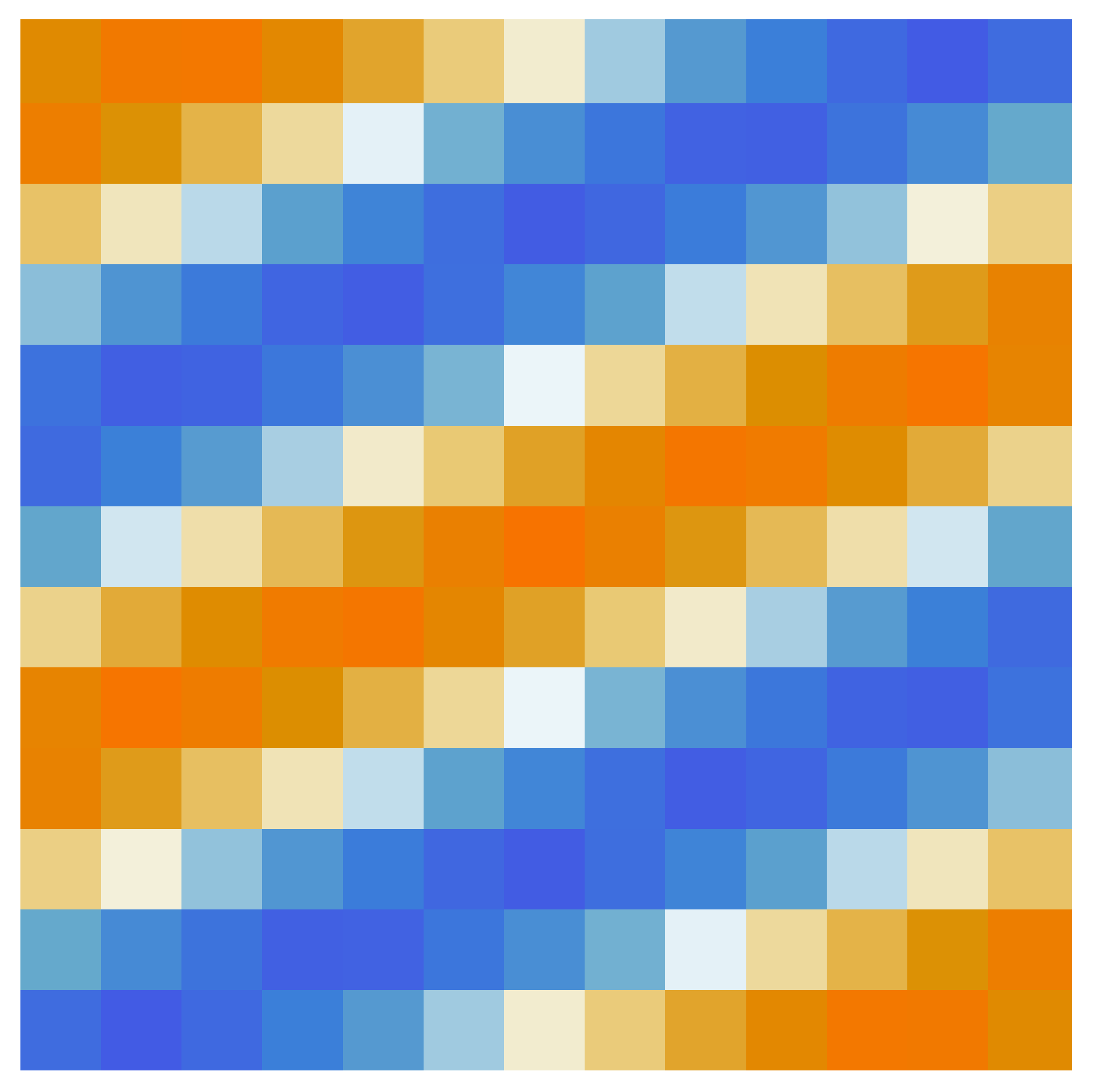}\end{center}
\caption{\small{\label{qp}The key polyomino and its (quasiperiodic) covariance function in the limit $\eps\to0$.}}
\end{figure}

This implies that the resulting covariance function
$\Cov(X_{0,0},X_{s,t})$ is
a quasiperiodic function of $(s,t)$ in the $\eps\to0$ limit: specifically, to leading order 
\be\label{qpcov}\Cov(X_{0,0},X_{s,t})=  Kw_0C(\log\frac{1}{\eps})\cos(s\theta+t\phi)\Big(1+o_\eps(1)\Big)\ee
for a constant $C$. See Figure \ref{qp} for a numerical plot.

A simpler quasiperiodic example, if we allow multiplicities, is the tile with $p(z,w) = 2+3z+4u.$  
It also has $2$ simple roots (with arguments corresponding to the angles of the $2,3,4$-triangle).
The covariance formula is again of the form (\ref{qpcov}) where $\theta,\phi$ are these angles. 

\begin{figure}
\begin{center}\includegraphics[width=2in]{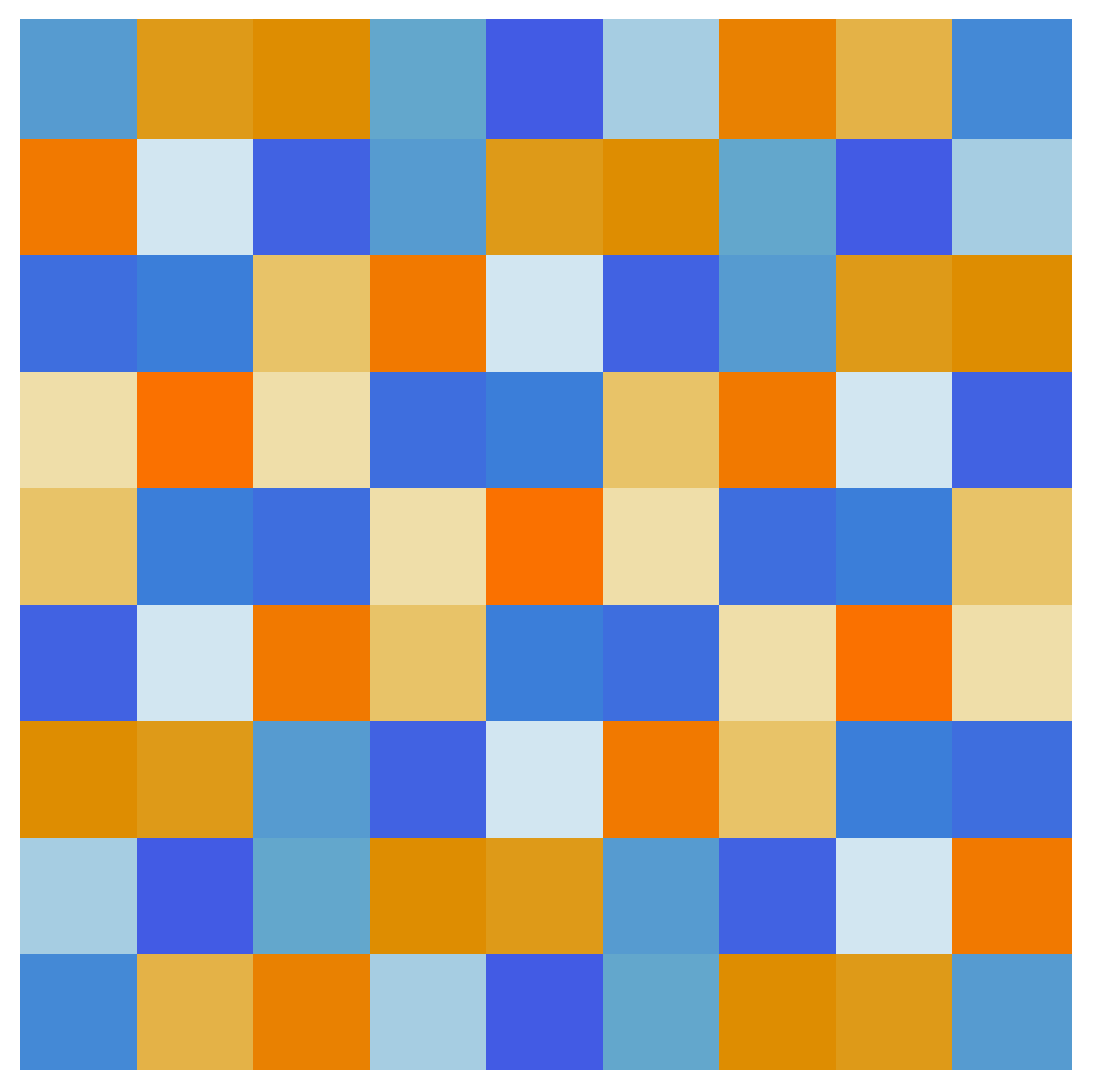}\end{center}
\caption{\small{\label{qptri}Quasiperiodic covariance function (when $\eps=0$) for the $2,3,4$-weighted $L$ polyomino.}}
\end{figure}

\subsection{Other examples}\label{other}

Not all polyominos have roots on $\T^2$. For example 
the tile of Figure \ref{product} has the property that its characteristic polynomial has no roots on $\T^2$.

\begin{figure}[H]
\begin{center}\includegraphics[width=1.in]{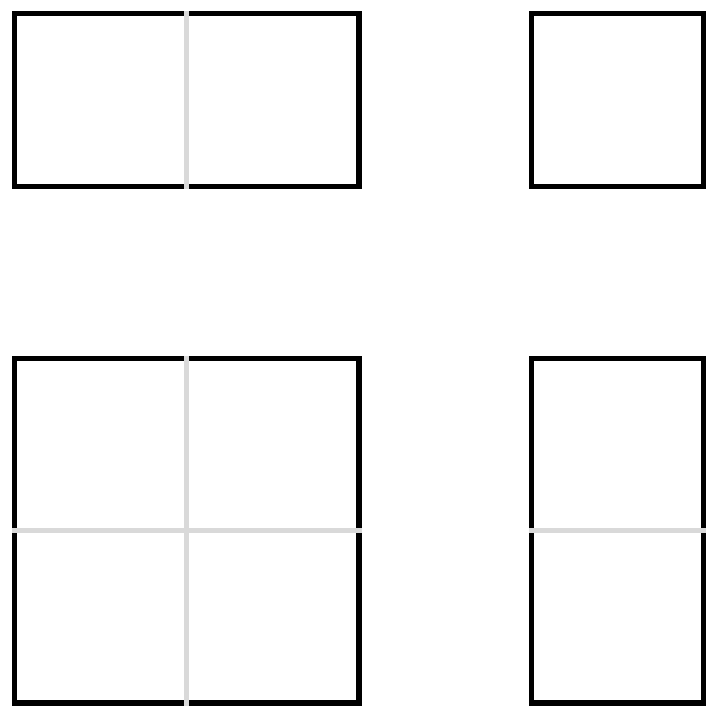}\end{center}
\caption{\small{\label{product}A (nonconnected) polyomino with no crystal structure.}}
\end{figure}

As a consequence its covariance function $\Cov(X_{0,0},X_{s,t})$ decays exponentially in $|s|+|t|$ even
for $w_0=0$. 
This example is however somewhat special since its characteristic polynomial 
$p(z,u)=(1+z+z^3)(1+u+u^3)$ is a product of two $1d$ polynomials. A genuinely 2d example which
does not factor is not easy to find. Here is one:
$$p(z,u) =1 + z + z^2 + z^3 + z^7 + z^9 + z^{12} + z^{13} + z^{17} + u.$$
For polyominos with multiplicity, an easy example is the one with $p(z,u) = 3+z+u$. 

The third class of polyominos in $\Z^2$ has characteristic polynomials with roots on $\T^2$ which are either 
\emph{not simple} or \emph{not isolated}.
And generally for $d$-dimensional polyominos with $d>2$ the roots on $\T^d$ are not typically isolated. 
It is harder
to formulate a general theory encompassing all these cases. We will simply illustrate with a few examples.

\subsubsection{The square polyomino}
We consider the square polyomino with $p(z,u) = (1+z)(1+u).$
We evaluate the integral (\ref{Covintegral}).
We first perform a contour integral over $u$. Assume $t\ge 0$.
\begin{align*}
\frac{Kw_0}{(2\pi i)^2}\int \frac{z^s u^t}{\eps+(1+z)(1+u)(1+\frac1z)(1+\frac1u)}\frac{du}{u}\frac{dz}{z}
&=\frac{Kw_0}{(2\pi i)^2}\int \frac{z^{s} u^{t}}{\eps zu+p^2}\,du\,d z\\
&=\frac{Kw_0}{(2\pi i)^2}\int\frac{z^s u^t}{(1+z)^2(u-r_1)(u-r_2)}du\,dz.
\end{align*}

Roots $r_1,r_2$ of the denominator are real with product $1$; choose $|r_1|<1<|r_2|$.
We get
$$=\frac{Kw_0}{2\pi}\int_0^{2\pi}\frac{z^s r_1^{t}}{(1+z)^2(r_1-r_2)}d\theta =\frac{Kw_0}{2\pi\sqrt{\eps}}\int\frac{\cos(\theta s) r_1^{t}}{\sqrt{8+\eps+8\cos\theta}}d\theta. 
$$
This is an elliptic function. For small $\eps$ the integral concentrates
near $\theta=\pi$,  and is to leading order, for constant $s,t$, 
$$Kw_0\frac{(-1)^{s+t}}{2\pi\sqrt{\eps}}\left(\log\frac{16}{\sqrt{\eps}}-2h_s-2h_t+O(\eps)\right)$$
where $h_m = 1+\frac13+\frac15+\dots+\frac1{2m-1}.$

We thus have 
$$\Cov(X_{0,0},X_{s,t}) = Kw_0\frac{(-1)^{s+t}}{2\pi\sqrt{\eps}}\left(\log\frac{16}{\sqrt{\eps}}-2h_{|s|}-2h_{|t|}+O(\eps)\right).$$
For larger $(s,t)$, on the order $s,t=O(\eps^{-1/2})$, the Fourier coefficients decay exponentially at a rate
determined by the component around $(0,0)$ in the complement of the amoeba of $\eps+|p|^2$,
(the \emph{amoeba} is the image of the zero set of $\eps+|p|^2$ under the map $(z,w)\mapsto(\log|z|,\log|w|)$).
In this case the component around the origin for small $\eps$ tends to a square of width 
$\frac12\sqrt{\eps}$, so the Fourier coefficients are of modulus of order 
$\exp(-\frac{\sqrt{\eps}}2\min\{|s|,|t|\}).$

\begin{figure}
\begin{center}\includegraphics[width=2.5in]{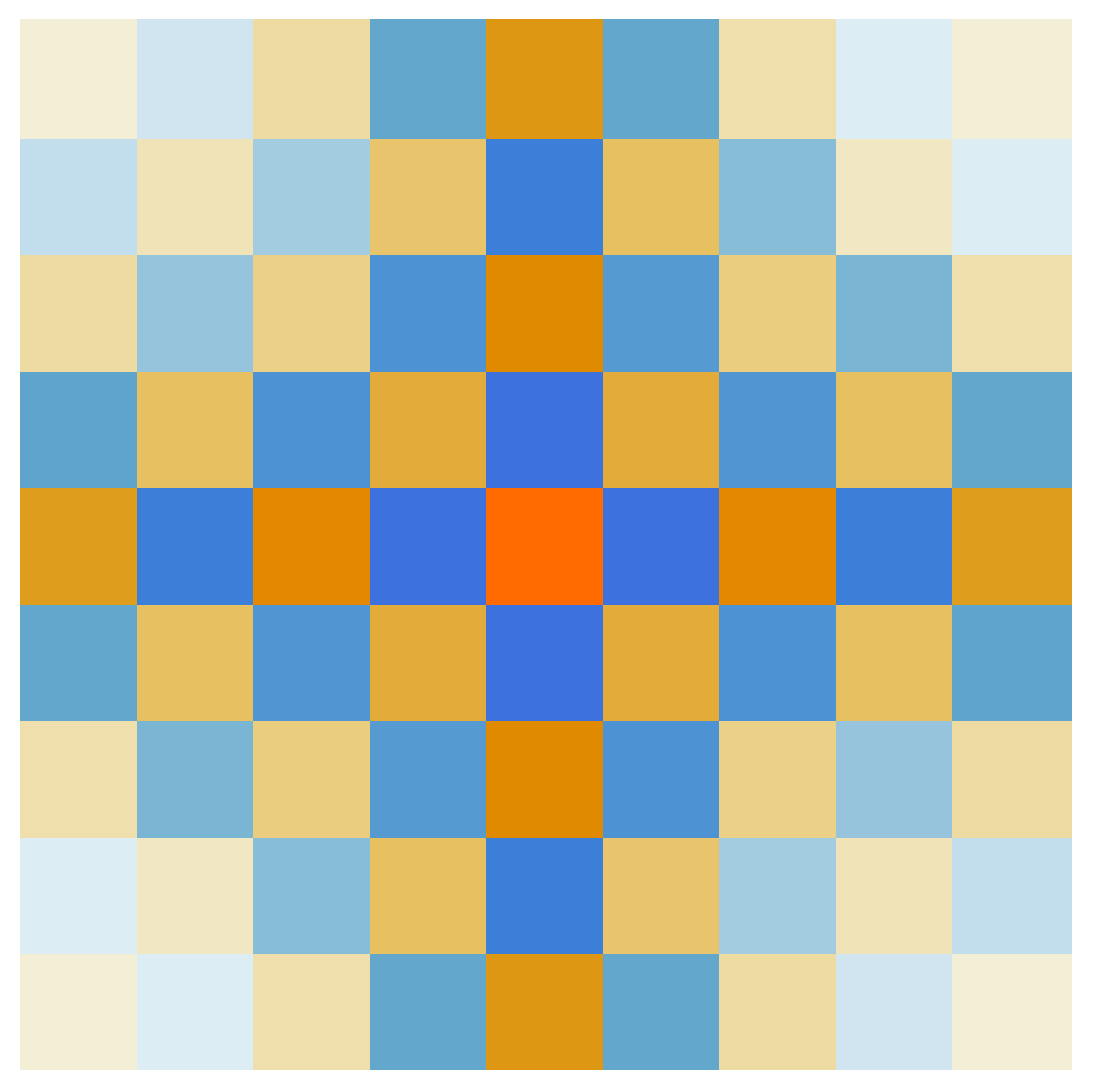}\hskip1cm\includegraphics[width=2.5in]{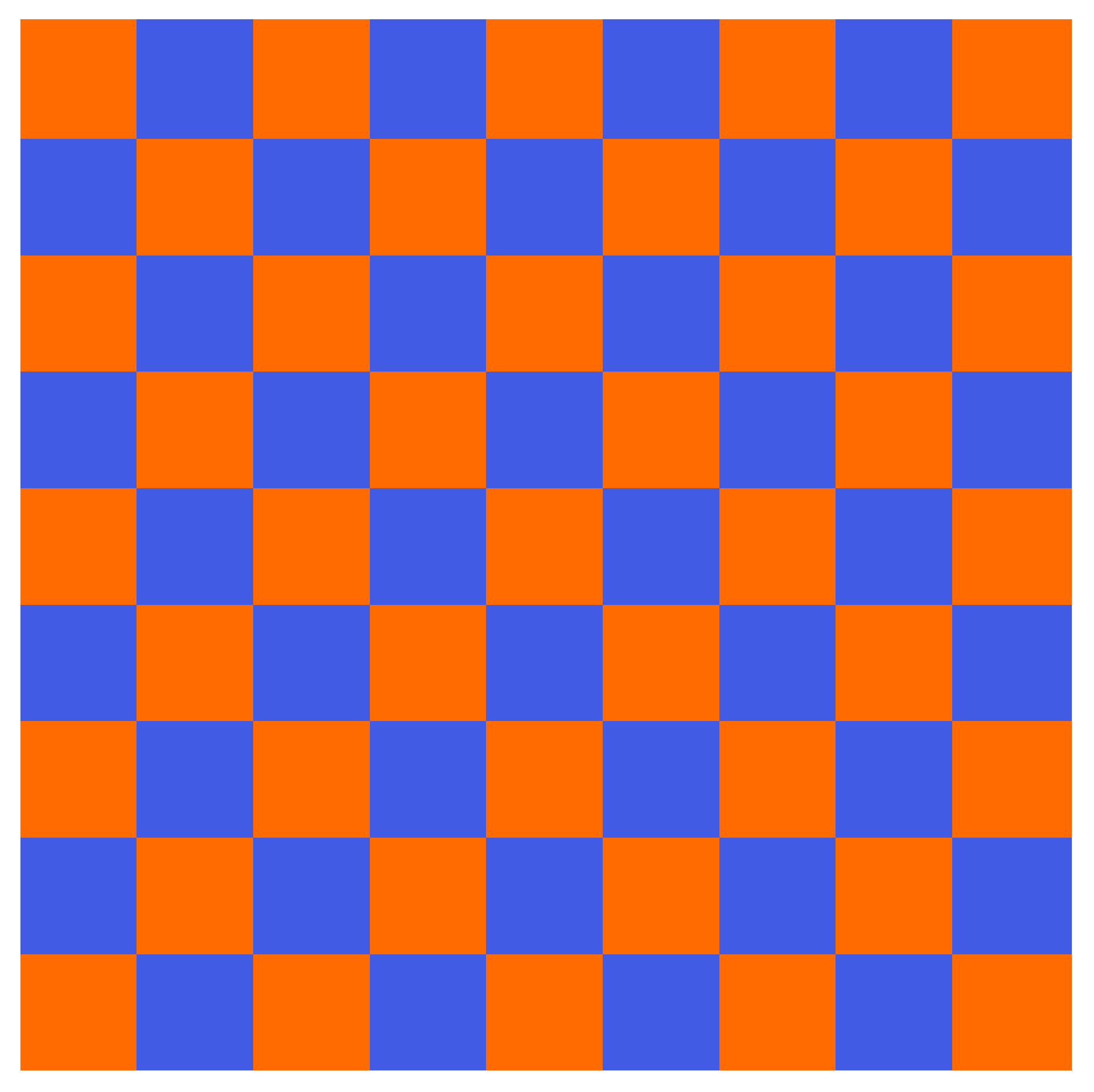}\end{center}
\caption{\small{\label{square}Covariances for the square polyomino, when $\eps=.001$ (left), and in the limit $\eps\to0$ (right).}}
\end{figure}

\subsubsection{The $+$ polyomino}
We consider the polyomino with $p(z,u) = 1+z+1/z+u+1/u.$ 
We need to compute ($Kw_0/n$ times) the Fourier series of $\frac1{\eps+|p|^2}$. 
Let $z=e^{i\theta}$ and $u=e^{i\phi}$.
The polynomial $p = 1+2\cos\theta+2\cos\phi$ vanishes on a whole curve on $\T^2$, 
where $\theta$ runs over the range $\theta\in[\pi/3,5\pi/3]$. 
For small $\eps$ the integral
concentrates near this curve. Let $(\theta_0,\phi_0)$ be a point on the curve. For $\theta$ fixed,
the denominator has the form
$\eps+a(\theta)(\phi-\phi_0)^2+O(\phi-\phi_0)^3$
where $a(\theta) = 3-4\cos\theta-4\cos^2\theta$. 

The contribution for this slice is (with $x=\phi-\phi_0$, and to leading order)
$$\frac1{2\pi}\int_{\R}\frac{dx}{\eps+a(\theta)x^2} = \frac{1}{2\sqrt{\eps a(\theta)}}.$$

We thus have to leading order (for fixed $(s,t)$ as $\eps\to0$)
$$\Cov(X_{0,0},X_{s,t}) = \frac{Kw_0}{2\pi\sqrt{\eps}}\int_{\pi/3}^{5\pi/3}\frac{\cos(s\theta+t\phi)d\theta}{\sqrt{3-4\cos\theta-4\cos^2\theta}}$$
where $\cos\phi+\cos\theta=1/2.$ 

Multiplied by $\sqrt{\eps}$ these covariances still tend to zero as $|s|+|t|\to\infty$, although the decay is slow,
of order $1/(s^2+t^2)^{1/4}$.
See Figure \ref{plus} for a numerical plot.

\begin{figure}
\begin{center}\includegraphics[width=3.in]{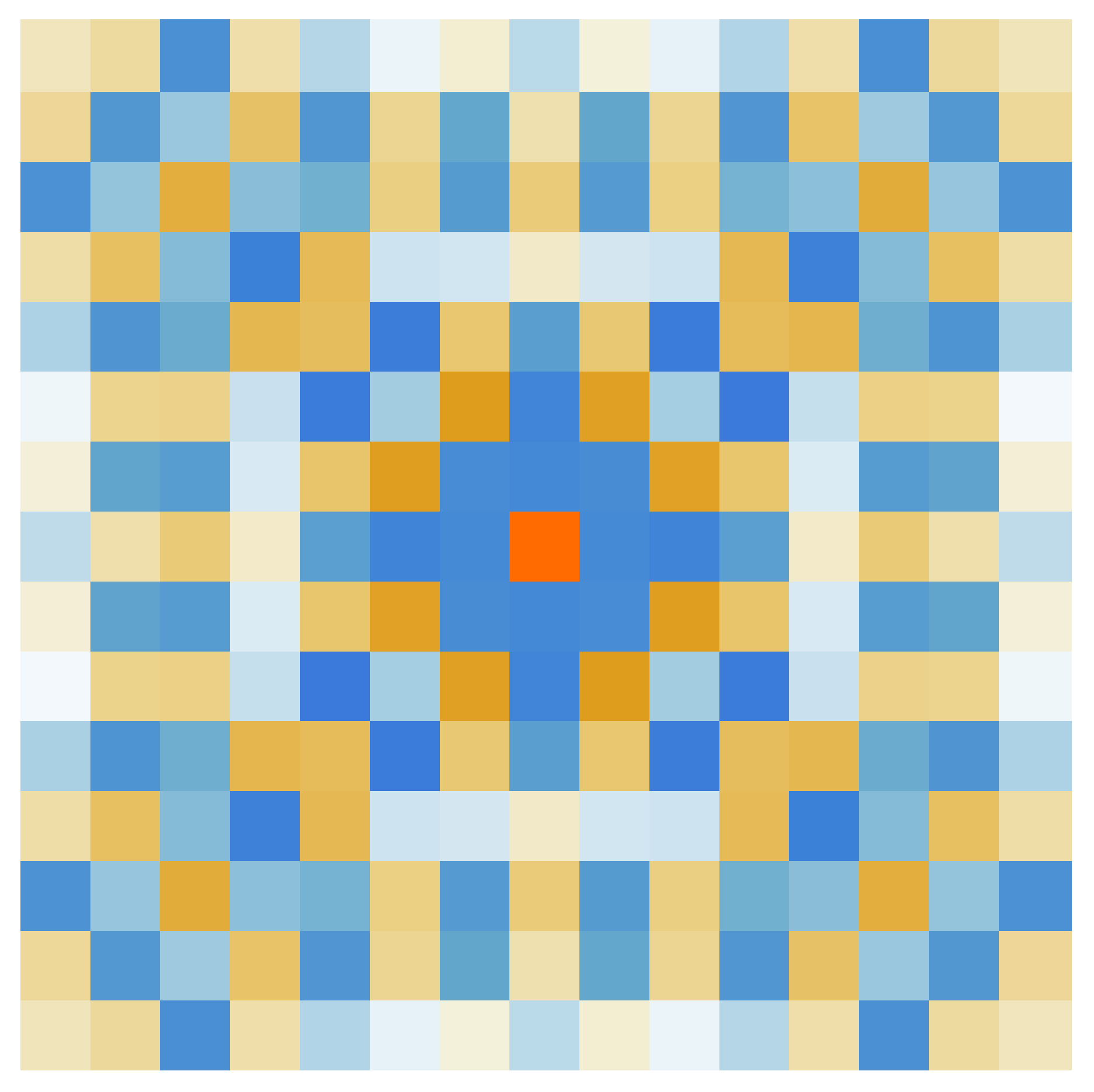}\end{center}
\caption{\small{\label{plus}Covariances for the ``plus" polyomino, in the limit $\eps\to0$.}}
\end{figure}

\section{Further directions}

Because tilings are so diverse, there are many directions for further research. Here are some ideas.

\begin{enumerate}
\item How is the covariance function for a 3D polyomino different? Typically the characteristic polynomial
intersects the unit $3$-dimensional torus $\T^3$ in a $1$-dimensional set. Is there an analogue of Theorem \ref{crystalthm}?

\item Find interesting examples with multiple tiles in $\Z^2$. 

\item For tilings of a planar domain with copies of the $L$ polyomino and a small density of monomers
(or plus polyomino and no monomers),
understand the influence of the boundary on the tiling. Can one create situations where there
is coexistence of the multiple phases?

\item What behavior do we expect for the Coulomb gas for a general tiling problem? What about the $L$ triomino?

\item Wang tiles (squares with colored edges, tiled so that adjacent tiles share the same color)
can be used to emulate any Turing machine. What is the multinomial-tiling analog of such a computation?

\item What is the natural multinomial analogue of the random partition model? What is the limit shape of a random such partition in that model (in the appropriate limit)?

\end{enumerate}

\section{Appendix: The Bessel-K function}\label{Besselscn}

The Bessel-K function, or modified Bessel function of the second kind, $B(s)$, can be defined for $s\in\R\setminus\{0\}$ 
by the integral
$$B(s) = \frac1{2\pi}\iint_{\R^2}\frac{e^{isx}\,dx\,dy}{1+x^2+y^2}.$$
For $z\in\C$, $B(|z|)$ is the Green's function for the massive laplacian, that is, it satisfies 
$$(I-\Delta)B(|z|) = \delta_0$$
where $\delta_0$ is the point measure.
 
The function $B(s)$ has a logarithmic singularity at the origin, with expansion
$$B(s) = \log\frac1s + \log 2-\gamma_E + O(s^2\log s).$$

An integral of the form 
$$\frac1{2\pi}\int_{\R^2}\frac{e^{i(sx+ty)}\,dx\,dy}{\eps+ax^2+bxy+cy^2}$$
where the quadratic form $ax^2+bxy+cy^2$ is positive definite,
can be converted into a Bessel-K integral with a linear change of coordinates,
yielding
$$=\frac1{\sqrt{c-\frac{b^2}{4a}}}B\left(\sqrt{\frac{\eps(cs^2-bst+at^2)}{ac-b^2/4}}\right).$$

\end{document}